\numberwithin{equation}{section}
\numberwithin{equation}{section}
\theoremstyle{plain}
\newtheorem{theorem}{Theorem}[section]
\newtheorem{lemma}[theorem]{Lemma}
\newtheorem{proposition}[theorem]{Proposition}
\newtheorem{corollary}[theorem]{Corollary}
\theoremstyle{definition}
\newtheorem{definition}[theorem]{Definition}
\newtheorem{remark}[theorem]{Remark}
\theoremstyle{remark}
\numberwithin{equation}{section}
\newcommand{\rr}{\mathbb{R}}
\newcommand{\R}{\mathbb{R}}
\newcommand{\Z}{\mathbb{Z}}
\newcommand{\eps}{\varepsilon}
\newcommand{\norm}[1]{\left\lVert #1 \right\rVert}
\newcommand{\1}{\mathbf{1}}
\def\Xint#1{\mathchoice
  {\XXint\displaystyle\textstyle{#1}}%
  {\XXint\textstyle\scriptstyle{#1}}%
  {\XXint\scriptstyle\scriptscriptstyle{#1}}%
  {\XXint\scriptscriptstyle\scriptscriptstyle{#1}}%
  \!\int}
\def\XXint#1#2#3{{\setbox0=\hbox{$#1{#2#3}{\int}$}
    \vcenter{\hbox{$#2#3$}}\kern-.5\wd0}}
\def\avgint{\Xint-}
\newcommand{\vertiii}[1]{{\left\vert\kern-0.25ex\left\vert\kern-0.25ex\left\vert #1 
    \right\vert\kern-0.25ex\right\vert\kern-0.25ex\right\vert}}
\numberwithin{equation}{section}
\begin{document}

\title[Weighted skeletons]{Weighted estimates for maximal functions associated to skeletons}

\author{Andrea Olivo}
\address{Departamento de Matem\'atica,
Facultad de Ciencias Exactas y Naturales,
Universidad de Buenos Aires, Ciudad Universitaria
Pabell\'on I, Buenos Aires 1428 Capital Federal Argentina} \email{aolivo@dm.uba.ar}

\author{Ezequiel Rela}
\address{Departamento de Matem\'atica,
Facultad de Ciencias Exactas y Naturales,
Universidad de Buenos Aires, Ciudad Universitaria
Pabell\'on I, Buenos Aires 1428 Capital Federal Argentina} \email{erela@dm.uba.ar}

\thanks{ Both authors are partially supported by grant PIP (CONICET) 11220110101018. Second author is also supported by granbt UBACyT 20020170200057BA}

\subjclass{Primary: 42B25. Secondary: 43A85.}

\keywords{Maximal functions, weights, skeletons}

\begin{abstract}
We provide quantitative weighted estimates for the $L^p(w)$ norm of a maximal operator associated to cube skeletons in $\mathbb{R}^n$. The method of proof differs from the usual in the area of weighted inequalities since there are no covering arguments suitable for the geometry of skeletons. We use instead a combinatorial strategy that allows to obtain, after a linearization and discretization, $L^p$ bounds for the maximal operator from an estimate related to intersections between skeletons and $k$-planes.
\end{abstract}

\maketitle

\section{Introduction and main results}\label{sec:intro}

\subsection{Averaging operators and packings} The purpose of this article is to study weighted estimates for a certain type of maximal operators related to a geometric problem of ``packing objects''. The most famous example of this type of problems is the Kakeya needle problem: how small can be a set $E\subset \mathbb{R}^n$ containing a unit line segment in every possible direction $e\in \mathbb{S}^{n-1}$? It is known that those sets can be of null Lebesgue measure, but the question regarding the smallest possible value for the dimension in $\mathbb{R}^n$ remains open for $n\ge 3$. The best known results on Kakeya with respect to the Hausdorff dimension were obtained as a consequence of $L^p$ bounds for an appropriate maximal operator defined by averaging over thin tubes pointing in any possible direction. More precisely, the Kakeya maximal function is defined by
\begin{equation*}
\mathcal{K}_\delta(f)(e)=\sup_{x\in\mathbb{R}^n}\frac{1}{|T_e^\delta(x)|}\int_{T_e^\delta(x)}|f(x)|\ dx,\qquad e\in\mathbb{S}^{n-1},
\end{equation*}
where $T_e^\delta(x)$ is a $1\times \delta$-tube (by this we mean a tube of length 1 and cross section of radius $\delta$) centered at $x$ in the direction of $e\in\mathbb{S}^{n-1}\subset\mathbb{R}^n$. Due to the existence of zero measure Kakeya sets, this operator can only be bounded on $L^p$ with a dependence on $\delta$ that blows up when $\delta\to 0$. It is precisely this rate of blow up the key ingredient to derive dimension bounds for the Kakeya sets.

 Other examples involving packing of circles of every possible radii or centered at any point of a prescribed set lead to the study of the corresponding maximal operators. 
For example, the problem of packing circles is related to the properties of the spherical maximal function
\begin{equation}\label{eq:spherical}
\mathcal{M}_{\text{sph}}f(x)=  \sup\limits_{ r>0} \frac{1}{\sigma(S^{n-1}(x,r))}\int _ {S^{n-1}(x,r)} |f(y)| \,d\sigma,
\end{equation}
where $\sigma$ is the surface measure on the sphere and $S^{n-1}(x,r)$ denotes the $n-1$ sphere centered on $x$ and with radius $r$. Results for the boundedness of this operator can be found in \cite{Stein76} for $n\ge 3$ and in \cite{Bou86} for the more difficult case of $n=2$ (see also \cite{KW99} and \cite{wol97} for related problems involving similar maximal operators).

In this article we focus on the maximal operator that arises when studying configurations of cube skeletons. Roughly speaking, the $k$-dimensional boundary of a cube with axes parallel sides in $\mathbb{R}^n$ for $0\le k < n$. In $\mathbb{R}^3$, for example, the case $k=2$ consists in considering the faces of the cube, $k=1$ is for the set of edges of the cube and the case $k=0$ describes the vertices. The problem of finding minimal values of the size for sets in $\mathbb{R}^n$ containing $k$-skeletons centered at any point was recently studied by Keleti, Nagy and Shmerkin in \cite{KNS18} and also by Thornton in \cite{Tho17}. The main result in \cite{KNS18} is that there exists a set containing a 1-skeleton centered at any point of the unit square $[0,1]^2$ having Lebesgue measure zero and, moreover, with zero Hausdorff dimension. The next natural step is to study how small can the dimension of such sets be for different notions of dimension. In \cite{OS18} Shmerkin and the first author study the boundedness properties of the corresponding maximal averaging operator. 

The naive analog of \eqref{eq:spherical} for skeletons replacing the spherical surface by the surface of the cube is, as it was observed in \cite{KNS18}, inadequate. The operator introduced in \cite{OS18} is the following version with a restricted range on the radius and averaging over a fattened $k$-skeleton.

\begin{definition}
Let $\delta>0$ and $0\le k < n$. For a given function $f\in L^1_{loc}(\mathbb{R}^n)$, the $k$-skeleton maximal function is defined by
\begin{equation}\label{eq:SkeletonMaximal}
M^{k}_{\delta}f(x) = \sup\limits_{1 \leq r \leq2} \min\limits_{j=1}^{N}  \displaystyle\frac{1}{|S_{k,\delta}^{j}(x,r)|} \int _ {S_{k,\delta}^{j}(x,r)} |f(y)|\, dy.
\end{equation}
Here $S_{k,\delta}^{j}(x,r)$ denotes, for $j=1,\ldots, \binom{n}{k}2^{n-k}$, a $\delta$-neighborhood of each one of the $k$-faces of the $k$-skeleton centered at $x$ with radius $r$.
\end{definition}

As in the case of the Kakeya problem, due to the example from \cite{KNS18} cited above, this operator is bounded on $L^p$  (this is a consequence of the boundedness properties of the classical Hardy-Littlewood maximal operator) but with a blow up when $\delta\to 0$. In this direction we mention the following result from \cite{OS18}.
\begin{theorem}[Olivo-Shmerkin, 2018]\label{thm:main-OS18}
Given $0\le k< n$, $1\le p <\infty$ and $\eps>0$, there exist  positive constants $C'(n,k,\eps), C(n,k)$ such that
\begin{equation*}
C'(n,k,\eps)\cdot \delta^{\frac{k-n}{2np}+\eps} \leq \norm{M^{k}_{\delta}}_{L^p(\mathbb{R}^n)\to L^p(\mathbb{R}^n)} \leq C(n,k)\cdot \delta^{\frac{k-n}{2np}},
\end{equation*}
for all $\delta\in (0,1)$.
\end{theorem}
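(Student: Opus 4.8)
The plan is to prove the two bounds separately, deriving the upper estimate by interpolation from a single endpoint inequality and the lower estimate from an extremal packing configuration. For the upper bound I would first record the trivial inequality $\norm{M^k_\delta f}_{L^\infty}\le\norm{f}_{L^\infty}$, so that by interpolation it suffices to establish the single endpoint estimate $\norm{M^k_\delta f}_{L^1}\le C(n,k)\,\delta^{\frac{k-n}{2n}}\norm{f}_{L^1}$; interpolating this with the $L^\infty$ bound reproduces the exponent for every $1\le p<\infty$, since $\frac{k-n}{2np}=\frac 1p\cdot\frac{k-n}{2n}$ is exactly the interpolated power $\theta=1/p$. The heart of the matter is therefore this endpoint inequality, where I would exploit the $\min$ in the definition. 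Linearizing, one chooses for a.e.\ $x$ a radius $r(x)\in[1,2]$ realizing the supremum, so that $M^k_\delta f(x)=\min_j A^j_{r(x)}|f|(x)$ with $A^j_r|f|(x)=|S^j_{k,\delta}(x,r)|^{-1}\int_{S^j_{k,\delta}(x,r)}|f|$; bounding the minimum by the geometric mean over the $N=\binom nk 2^{n-k}$ faces yields the pointwise estimate $M^k_\delta f(x)\le\prod_{j=1}^N\big(A^j_{r(x)}|f|(x)\big)^{1/N}$.

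The key observation is that the faces group into the $\binom nk$ coordinate $k$-plane orientations, and a $\delta$-neighborhood of a $k$-face is a box long (size $\sim r$) in the $k$ coordinate directions of its orientation and thin (size $\delta$) in the complementary $n-k$ directions. Dominating each $A^j_{r(x)}|f|(x)$ by the maximal slab average $\widetilde M_S|f|$ over its orientation $S$ — a quantity depending only on the projection $\pi_S(x)$ onto the complementary coordinate plane — turns the product into $\prod_S \big(\widetilde M_S|f|(\pi_S(x))\big)^{\theta_S}$ with weights summing to $1$ on each coordinate. Integrating such a product is exactly the setting of the Loomis--Whitney / Finner--Brascamp--Lieb inequality for coordinate $k$-planes, and carrying out that integration produces the factor $\delta^{\frac{k-n}{2n}}$ coming from the $n-k$ thin directions of each orientation. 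The obstacle I anticipate is twofold: first, controlling the supremum over the continuum $r\in[1,2]$ beneath the minimum, which I would handle by discretizing into the $\sim\delta^{-1}$ scales genuinely distinguishable at width $\delta$ and absorbing them through the $\delta$-separation of the fattened faces so as not to spoil the exponent; and second, verifying that the multilinear integration yields precisely the power $\frac{k-n}{2n}$ rather than a lossy one, which is where the complementary-slab geometry must be used sharply rather than through crude H\"older.

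For the lower bound I would test on $f=\mathbf 1_E$, with $E$ a $\delta$-neighborhood of a carefully arranged family of $k$-skeletons. Using the essentially optimal constructions of small sets containing a $k$-skeleton centered at every point of a fixed cube, in the spirit of Keleti--Nagy--Shmerkin \cite{KNS18} and discretized at scale $\delta$, one produces a set with $|E|\lesssim\delta^{\frac{n-k}{2n}-\eps'}$ such that for every center $x$ in a set of measure $\sim 1$ there is a radius $r\in[1,2]$ whose entire fattened skeleton lies in $E$, forcing $M^k_\delta\mathbf 1_E(x)\gtrsim 1$ there. Then $\norm{M^k_\delta\mathbf 1_E}_{L^p}\gtrsim 1$ while $\norm{\mathbf 1_E}_{L^p}=|E|^{1/p}\lesssim\delta^{(\frac{n-k}{2n}-\eps')/p}$, and taking the quotient gives the claimed lower bound with $\eps=\eps'/p$. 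The loss $\eps'$ is intrinsic to this scheme, since the covering constructions are efficient only up to a subpolynomial factor at scale $\delta$; its removal would demand a construction saturating the packing bound exactly, which is precisely what accounts for the gap between the two exponents in the statement.
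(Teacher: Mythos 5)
Your lower-bound sketch is fine and is exactly what the paper points to (test $M^k_\delta$ on the indicator of a discretized Keleti--Nagy--Shmerkin-type construction; the bookkeeping $\eps=\eps'/p$ is correct). The upper bound, however, has a fatal gap at its central step. When you dominate each face average $A^j_{r(x)}|f|(x)$ by a slab maximal quantity depending \emph{only} on the projection $\pi_S(x)$, each face necessarily acquires its own supremum over the radius, and the constraint that one common $r$ must serve all $N$ faces is destroyed. That constraint is the entire source of the gain, and the loss is polynomial in $\delta^{-1}$, incurred at the pointwise majorization, so no Loomis--Whitney/Finner integration afterwards can recover it. Concretely, in the plane let $f=\frac{1}{4\delta^2}\1_P$ where $P$ is the union of the four $\delta\times\delta$ squares at the corners of $[0,3]^2$, so $\norm{f}_{L^1}=1$. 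For every $x\in[1,2]^2$ each of the four faces has its own admissible radius (e.g.\ $r_{\mathrm{left}}=x_1$, $r_{\mathrm{right}}=3-x_1$) placing it on top of two of the corner squares, so your majorant $\prod_j\bigl(\widetilde M_S|f|(\pi_S(x))\bigr)^{1/4}$ is $\gtrsim\delta^{-1}$ on a set of measure $1$, hence has $L^1$ mass $\gtrsim\delta^{-1}\norm{f}_{L^1}$. But a \emph{single} radius forces $x_1-r\approx0$ and $x_1+r\approx3$ simultaneously, pinning $(x,r)$ to within $O(\delta)$ of $((3/2,3/2),3/2)$; thus $\norm{M^1_\delta f}_{L^1}\approx\delta$. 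So the method cannot prove anything better than $\delta^{-1}$ at $p=1$, while the theorem requires $\delta^{-1/4}$. If instead you keep the common radius $r(x)$, the factors depend on all of $x$, not on projections, and Loomis--Whitney no longer applies: either way the approach fails. The missing idea is precisely the combinatorial ingredient the paper imports from [OS18], Lemma \ref{lem:chooseelement}: choose \emph{one} face per skeleton so that every coordinate $k$-plane contains at most $Cu^{1-\frac{(n-k)(2n-1)}{2n^2}}$ chosen faces. With $u=\delta^{-n}$ skeletons per unit cube this caps the number of chosen fattened faces through any point by $\delta^{\frac{(n-k)(2n-1)}{2n}-n}$, and multiplying by the kernel normalization $|Q_z(i)|/|\ell_i|\approx\delta^k$ gives exactly $\delta^{\frac{k-n}{2n}}$; this incidence bound, fed into the duality scheme of Proposition \ref{pro:normaindicadoras}, is where the exponent comes from.

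There is a second, smaller gap: you interpolate $M^k_\delta$ directly between $L^1$ and $L^\infty$. But $M^k_\delta$ is not sublinear: for positive averages, $\min_j A_j(f+g)\ge\min_j A_jf+\min_j A_jg$, and no bound $\min_j(a_j+b_j)\le C(\min_j a_j+\min_j b_j)$ holds (take $a=(1,0)$, $b=(0,1)$; correspondingly, split the four corner squares above between $f$ and $g$). So neither Marcinkiewicz nor Riesz--Thorin applies to $M^k_\delta$ itself. The paper's route around this is the localization/linearization \eqref{eq:MvsMtilde}: one proves the endpoint bounds for the genuinely linear, face-selected operators $\widetilde{M}_{\rho,\delta}$ uniformly in $\rho$, interpolates \emph{those}, and only then takes the supremum over $\rho$. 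Your interpolation step can be repaired in the same way, but only once the $L^1$ endpoint is established for these linearized operators --- which brings you back to Lemma \ref{lem:chooseelement}.
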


From this theorem it is possible to recover the known values for the Box counting dimension of sets containing skeletons centered at any point of a prescribed set of centers under the condition of having full Box dimension. The lower bound relies on a specific construction and to obtain the upper bound there is in fact a result for a bigger (normwise) maximal type operator which is localized on a given cube and it is linear (see \eqref{eq:MvsMtilde} and \cite[Lemma 2.6]{OS18} for the original source).

\subsection{Weighted estimates for averaging operators} The main question on the subject of weighted estimates for a given operator $T$ bounded on $L^p(\mathbb{R}^n)$ for some $p\ge 1$ is to characterize those non negative locally integrable functions $w:\mathbb{R}^n\to \mathbb{R}_{\ge 0}$ such that $T$ maps boundedly $L^p(\mathbb{R}^n,wdx)$ to  $L^p(\mathbb{R}^n,wdx)$. We will denote this spaces by $L^p(w)$ for short. The most famous problem in this direction is the celebrated Muckenhoupt's theorem \cite{Muckenhoupt:Ap} stating that the usual Hardy-Littlewood maximal function (over cubes, for example) defined by 
\begin{equation*}
Mf(x)=\sup_{Q\ni x}\avgint_Q |f(y)| dy
\end{equation*}
 is bounded on  $L^p(w)$  if and only if the weight $w$ satisfies the so called $A_p$ condition given by
\begin{equation}\label{eq:Ap}
[w]_{A_p}:=\sup_Q \left(\avgint_Q w\,dx\right) \left(\avgint_Q w^{1-p'}\,dx\right)^{p-1} <\infty.
\end{equation}
Here, as usual, teh symbol $\avgint_E f d\mu:=\frac{1}{\mu(E)}\int_E fd\mu$ denotes the average of the function $f$ with respect to the measure $\mu$ over the set $E$. This condition also characterizes the boundedness on $L^p(w)$ of other important operators in analysis such as the Hilbert transform, the Riesz transforms and very general Calder\'on-Zygmund operators. More recently, the interest was focused on obtaining precise quantitative estimates for the operator norm in weighted $L^p$ spaces. The first fundamental result on this direction is due to Buckley \cite{Buckley} and is the following norm inequality. For $1<p<\infty$, we have that
\begin{equation}\label{eq:HL-Maximal}
\|Mf\|_{L^p(w)}\le [w]^{\frac{1}{p-1}}_{A_p}\|f\|_{L^p(w)}.
\end{equation}

Other important contribution to precise weighted estimates can be found in \cite{Hytonen:A2,HL-Ap-Ainf,HP} (and the references therein) for the case of cubic $A_p$ weights and several classical operators.

It is then completely natural to ask for classes of weights characterizing the boundedness of the averaging operators discussed in the present paper. As an ideal result, one could expect to find a geometric condition similar to the $A_p$ from \eqref{eq:Ap}. This is in fact possible when dealing with euclidean or metric balls, rectangles or more general bases of convex sets.

However, the geometric nature of the Kakeya or spherical maximal operators is much more difficult to handle and such a clean condition is not known. For the Kakeya maximal operator, some results on weighted estimates can be found in  \cite{Muller1990} where the author obtains a rather involved geometric condition. 

Another related problem on weighted inequalities is the so called Fefferman-Stein type inequality. Given some (maximal) operator $\mathcal{M}$, the problem is to find the best possible range of exponent for which the following inequality is valid for any non negative weight $w$:
\begin{equation*}\label{eq:Fefferman-Stein}
\|\mathcal{M}f\|_{L^p(w)}\lesssim\|f\|_{L^p(\mathcal{M}w)},
\end{equation*}
where the weight on the left hand side is different from the one on the right hand side, being the latter the one obtained by applying the operator itself. The results in  \cite{Vargas-WeightedKakeya, MullerSoria93, Tanaka2001} are in this direction for the case of Kakeya. For  the  spherical maximal function, we refer the reader to \cite{DV96} and \cite{DuoSeijo} for several results concerning power weights of the form $w(x)=|x|^{\alpha}$. Here, as in the case of Kakeya, there is no geometric $A_p$ condition as in the case of HL maximal function.

\subsection{Our contribution}
In this paper we will show that there is, for the skeleton maximal function defined in \eqref{eq:SkeletonMaximal}, a necessary condition that resembles the $A_p$ condition from \eqref{eq:Ap}. In addition, we also find a (different) sufficient condition for the boundedness that also provides a precise quantitative dependence on an $A_p$-like constant. This latter result is obtained following some ideas 
from \cite{OS18} introducing a discretization and linearization of the maximal operator. 

\begin{remark}
We choose to present our results in the setting of 1-skeletons in $\mathbb{R}^2$ for the sake of clarity in the exposition. Many of the conclusions can be trivially extended to higher dimension but we will omit here the discussion of such extensions and only mention the results in Section \ref{subsec:higher}.
\end{remark}

We introduce here an $A_p$ type condition suited to our purposes.

\begin{definition}\label{def:Ap-nonlinear}
Let $w$ be a weight and $1<p<\infty$. 
Let $\mathcal{S} \subseteq \rr^2$ be the family of all 1-skeletons (boundaries of squares) with center in $x \in \rr^2$ and sidelength $1 \leq r \leq 2$. Each element of this family $\mathcal{S}$ will be denoted as  $S(x,r)$. Let $\delta>0$ be a fixed parameter and recall that $S_\delta (x,r)$ denotes a $\delta$-neighborhood of $S(x,r)$  and $S^j_\delta (x,r)$, $j=1,\ldots,4$, each one of the four fattened sides. We say that $w$ belongs to the \emph{skeleton} $\mathcal{A}^S_{p}$ class if 
\begin{equation}\label{eq:Ap-nonlinear}
 \sup_{S(x,r) \in \mathcal{S}} \left(\frac{1}{|S_\delta (x,r)|}\int_{Q_\delta(x)} w \right) \left(\avgint_{\ell_{\delta}(x,r)} w^\frac{-p'}{p} \right)^{p} \left( \avgint_{S_\delta(x,r)} w^\frac{-p'}{p} \right)^{-1} < \infty,
\end{equation}
where $Q_\delta(x)$ is a square of sidelength $\delta$ centered at $x$ and $\ell_{\delta}(x,r)$ denotes the side where the following minimum 
$$\min_{j=1}^4 \frac{1}{|S^j_\delta (x,r)|}\int_{S^j_\delta (x,r)} w^\frac{-p'}{p} \, dx$$
is attained. When finite, the above supremum will be denoted by $[w]_{\mathcal{A}^S_{p}}$.
\end{definition}

Our first theorem is the following, regarding a necessary condition for the boundedness of $M^{1}_{\delta}$ defined in \eqref{eq:SkeletonMaximal}. From now on, we simply denote $M_\delta$ omitting the superscript.

\begin{theorem}\label{thm:necessary}
Let $w$ be a weight in $\mathbb{R}^2$ and suppose that the maximal operator $M_\delta$, for some fixed $\delta$ is bounded on $L^p(w)$ for $1<p<\infty$. Then the weight $w$ belongs to $\mathcal{A}^S_{p}$.
\end{theorem}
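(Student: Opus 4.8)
The plan is to derive the $\mathcal{A}^S_p$ condition by testing the boundedness hypothesis $\|M_\delta f\|_{L^p(w)} \le C \|f\|_{L^p(w)}$ against a carefully chosen family of input functions, one for each skeleton $S(x,r)$. The natural candidate, as in the classical proof that boundedness of the Hardy--Littlewood maximal operator forces the $A_p$ condition, is to take $f$ supported on the fattened skeleton $S_\delta(x,r)$ and equal (up to normalization) to a power of the weight. Concretely, I would set $f = w^{-p'/p}\cdot \mathbf{1}_{S_\delta(x,r)}$, so that $\int f^p w = \int_{S_\delta(x,r)} w^{-p'} w = \int_{S_\delta(x,r)} w^{-p'/p}$, which controls the right-hand side of the tested inequality.

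The key structural feature to exploit is that $M_\delta$ is defined with a \emph{minimum} over the four fattened sides $S^j_\delta(x,r)$ rather than an average over the whole skeleton. First I would observe that for \emph{any} point $y$ lying in the small central square $Q_\delta(x)$, the skeleton $S(x,r)$ with $1\le r\le 2$ is an admissible competitor in the supremum defining $M_\delta f(y)$; indeed every such skeleton is centered at $x$, and $Q_\delta(x)$ is contained in the region over which all four sides $S^j_\delta(x,r)$ are ``visible'' to $y$ in the sense needed. Hence for $y\in Q_\delta(x)$,
\begin{equation*}
M_\delta f(y) \ge \min_{j=1}^4 \frac{1}{|S^j_\delta(x,r)|}\int_{S^j_\delta(x,r)} |f|
 = \min_{j=1}^4 \avgint_{S^j_\delta(x,r)} w^{-p'/p},
\end{equation*}
and by the definition of $\ell_\delta(x,r)$ the right-hand minimum is exactly $\avgint_{\ell_\delta(x,r)} w^{-p'/p}$. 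Raising to the $p$-th power and integrating the lower bound over $Q_\delta(x)$ against $w$ gives
\begin{equation*}
\int_{Q_\delta(x)} (M_\delta f)^p \, w \ge \left(\avgint_{\ell_\delta(x,r)} w^{-p'/p}\right)^p \int_{Q_\delta(x)} w.
\end{equation*}

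Combining this lower bound with the upper bound $\int (M_\delta f)^p w \le C^p \int f^p w = C^p \int_{S_\delta(x,r)} w^{-p'/p}$ and rearranging yields
\begin{equation*}
\left(\frac{1}{|S_\delta(x,r)|}\int_{Q_\delta(x)} w\right)\left(\avgint_{\ell_\delta(x,r)} w^{-p'/p}\right)^p \left(\avgint_{S_\delta(x,r)} w^{-p'/p}\right)^{-1} \le C^p,
\end{equation*}
after multiplying and dividing by $|S_\delta(x,r)|$ to match the normalizations in \eqref{eq:Ap-nonlinear}. Taking the supremum over all $S(x,r)\in\mathcal{S}$ produces $[w]_{\mathcal{A}^S_p}\le C^p<\infty$, which is the claim. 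I would finally need to handle the degenerate case where $\avgint_{S_\delta(x,r)} w^{-p'/p}=\infty$ or the test function fails to lie in $L^p(w)$, which can be dispensed with by a routine truncation argument (replacing $w$ by $\min(w,N)$ and letting $N\to\infty$).

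The main obstacle I anticipate is the geometric verification in the second paragraph: justifying that every point of the central square $Q_\delta(x)$ genuinely sees the skeleton $S(x,r)$ as a legitimate competitor in the supremum, and checking that the normalization factor $|S_\delta(x,r)|$ (a single skeleton's volume) is uniformly comparable across the relevant sides so that the bookkeeping between $|S^j_\delta|$, $|S_\delta|$ and $|Q_\delta|$ closes cleanly. Unlike the Hardy--Littlewood case, where one tests on the very cube appearing in the $A_p$ supremum, here the support of $f$ (the whole fattened skeleton) and the region where the lower bound for $M_\delta f$ is extracted (the central square $Q_\delta(x)$) are disjoint, and the asymmetry introduced by the $\min_j$ is precisely what accounts for the unusual three-factor shape of \eqref{eq:Ap-nonlinear}; getting these geometric comparisons right is where the care is needed.
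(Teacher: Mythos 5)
Your overall strategy --- testing the boundedness against $f = w^{-p'/p}\mathbf{1}_{S_\delta(x,r)}$, extracting a lower bound for the maximal function on the small central square $Q_\delta(x)$, and rearranging into the three-factor expression before taking the supremum over $\mathcal{S}$ --- is exactly the paper's proof. However, the pivotal step, the one you yourself flag as the ``main obstacle,'' is wrong as stated, and it is not a mere verification detail. You claim that for $y \in Q_\delta(x)$ the skeleton $S(x,r)$ is ``an admissible competitor in the supremum defining $M_\delta f(y)$.'' It is not: by \eqref{eq:SkeletonMaximal} the operator is \emph{centered} --- the supremum runs only over the radius $r\in[1,2]$, and every competing skeleton is centered at the evaluation point $y$ itself, so a skeleton centered at $x\neq y$ never appears in that supremum. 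Nor can you repair this by simply taking the competitor $S(y,r)$ with the same radius and the same width $\delta$: its fattened sides $S^j_\delta(y,r)$ are translates (by $y-x$) of $S^j_\delta(x,r)$, not supersets, and the test function could be concentrated precisely on the sliver $S^j_\delta(x,r)\setminus S^j_\delta(y,r)$, making $\avgint_{S^j_\delta(y,r)}|f|$ much smaller than $\avgint_{S^j_\delta(x,r)}|f|$. So the claimed inequality $M_\delta f(y)\ge \min_j \avgint_{S^j_\delta(x,r)}|f|$ genuinely fails.

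The missing idea, which is how the paper closes this gap, is to enlarge the width: for $y\in Q_\delta(x)$ one has, side by side, the containment $S^j_\delta(x,r)\subseteq S^j_{4\delta}(y,r)$, whence for nonnegative $f$, using $|S^j_{4\delta}(y,r)|\approx|S^j_\delta(x,r)|$,
\[
\min_{j=1}^4 \avgint_{S^j_\delta(x,r)}|f| \;\lesssim\; \min_{j=1}^4 \avgint_{S^j_{4\delta}(y,r)}|f| \;\le\; M_{4\delta}f(y).
\]
The price is that the operator appearing in the lower bound is $M_{4\delta}$, not $M_\delta$, so the boundedness hypothesis must be invoked at width $4\delta$ (this is why the paper's constant is written $C(4\delta,w)$); equivalently, boundedness of $M_\delta$ yields the $\mathcal{A}^S_p$ condition with fattening parameter $\delta/4$ in place of $\delta$. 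Note that the min-over-sides structure of the operator prevents any easy pointwise comparison $M_{4\delta}f\lesssim M_\delta f$, so this change of parameter cannot be absorbed afterwards; it must be built into what you prove. With this fix, the rest of your argument (integration over $Q_\delta(x)$, the identity $\int f^p w=\int_{S_\delta(x,r)}w^{-p'/p}$, the rearrangement, and the truncation argument for degenerate weights) goes through and coincides with the paper's proof.
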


Now, regarding the sufficiency, we need to introduce some extra notation to present an auxiliary operator which is a linearized and discretized version of the skeleton maximal operator. We start by recalling this lemma from \cite{OS18} that chooses wisely one $k$-face on each one of the member of a large family of skeletons avoiding large overlaps.

\begin{lemma}\label{lem:chooseelement}
There is a constant $C_{n,k}<\infty$, depending only on $n,k$, such that the following holds. Let $\{S_k(x_i,r_i)\}_{i=1}^u$ be a finite collection of $k$-skeletons in $\rr^n$. Then it is possible to choose one $k$-face of each skeleton with the following property: If $V$ is an affine $k$-plane which is a translate of a coordinate $k$-plane, then $V$ contains at most
\[
C_{n,k}\displaystyle u^{1-\frac{(n-k)(2n-1)}{2n^2}}
\]
of the chosen $k$-faces.
\end{lemma}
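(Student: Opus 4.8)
The plan is to attack the statement by a greedy/random selection argument on the choice of $k$-face per skeleton, and then bound, for any fixed translate $V$ of a coordinate $k$-plane, how many chosen faces can possibly lie in $V$. The exponent $1-\frac{(n-k)(2n-1)}{2n^2}$ is the signature of an optimization, so I expect the proof to reduce to a counting extremal problem that is solved by balancing two competing quantities.

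First I would set up the combinatorial framework. A $k$-skeleton $S_k(x_i,r_i)$ has $\binom{n}{k}2^{n-k}$ many $k$-faces, each of which is a translate of one of the $\binom{n}{k}$ coordinate $k$-planes. Fix one coordinate $k$-plane type, say spanned by a coordinate subset $I\subseteq\{1,\dots,n\}$ with $|I|=k$; a chosen face of type $I$ lies in a fixed affine translate $V$ (of type $I$) precisely when the center $x_i$ and radius $r_i$ are constrained so that the relevant face hits $V$. The key geometric observation is that for each skeleton, requiring a type-$I$ face to lie in a given $V$ pins down the $n-k$ "transverse" coordinates of the center (up to the discrete $\pm r_i/2$ shifts coming from which parallel face we pick), leaving the $k$ "in-plane" coordinates and the radius free. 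So the number of skeletons whose type-$I$ face can be placed into a \emph{single} $V$ is governed by how the $u$ centers distribute among the transverse hyperplane slabs, and this is where the overlap must be controlled.

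The main step, and the hard part, is the selection itself. I would argue that we cannot simply always pick the type-$I$ face for a single fixed $I$, since an adversary could place all $u$ centers on one transverse slab and force $\sim u$ faces into one $V$. Instead the faces must be distributed across the different coordinate-plane types and across different transverse positions. The natural approach is probabilistic: assign to each skeleton a face chosen from an appropriately weighted distribution over its available faces (or, equivalently, run a greedy argument that assigns faces so as to keep every $V$-count low), and then estimate the expected maximum load over all relevant $V$. The budget is: there are of order $u$ relevant planes $V$ (those actually containing a potential face), each face-assignment can contribute to at most one $V$ of each type, and balancing the worst-case load against the number of $V$'s yields the exponent. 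Concretely, one expects an inequality of the shape (load per plane) $\times$ (number of planes) $\gtrsim u$, together with a transversality/incidence constraint that the number of faces a single skeleton can donate to a fixed $V$ scales like $u^{1-\frac{(n-k)(2n-1)}{2n^2}}$; making these match forces the claimed exponent.

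To carry this out I would: (1) fix a coordinate $k$-plane type and a translate $V$, and write the count of skeletons able to contribute a face to $V$ as a sum over the transverse coordinate of a local incidence count; (2) observe that summing over the $\binom{n}{k}$ types decouples the problem by type, so it suffices to bound each type separately and then combine with a constant depending only on $n,k$; (3) set up the optimization whose variables are the number of distinct transverse positions used versus the number of skeletons per position, subject to the total being $u$; and (4) choose the selection (deterministically by a pigeonhole/greedy refinement, or in expectation by a random assignment followed by a union bound over the $O(u)$ candidate planes $V$) so that the maximum load is minimized. The delicate point I anticipate is step (3)–(4): verifying that the extremal configuration yields exactly $u^{1-\frac{(n-k)(2n-1)}{2n^2}}$ rather than a weaker exponent, and checking that the union bound over candidate planes costs only a polynomial factor absorbable into $C_{n,k}$. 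I would also need to confirm that radii lying in the bounded range $[1,2]$ (or an unrestricted range) does not change the counting, since the radius contributes one extra free parameter in the incidence count and must be accounted for in the transversality dimension count.
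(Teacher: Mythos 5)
First, an important point of reference: the paper under review does \emph{not} prove Lemma \ref{lem:chooseelement} at all; it is explicitly recalled from \cite{OS18} (building on the combinatorics of \cite{KNS18}), so there is no internal proof to compare your proposal against, and the relevant benchmark is the Olivo--Shmerkin selection argument. That argument is not a generic load-balancing scheme: its substance is a counting estimate for skeletons all of whose candidate faces lie on ``popular'' coordinate planes, and this counting exploits the rigid coupling between the faces of a single skeleton in different directions through the shared center and radius --- the type-$I$ face with sign pattern $\varepsilon$ lies in the plane determined by the tuple $(x_j+\varepsilon_j r)_{j\notin I}$, so the planes met by one skeleton form a highly structured (additive-combinatorial) configuration. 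Your outline defers exactly this point: steps (3)--(4) are stated as expectations (``one expects'', ``the delicate point I anticipate''), and no mechanism is given that would produce the exponent $1-\frac{(n-k)(2n-1)}{2n^2}$. Moreover, your ``transversality/incidence constraint that the number of faces a single skeleton can donate to a fixed $V$ scales like $u^{1-\frac{(n-k)(2n-1)}{2n^2}}$'' is incoherent: since distinct sign patterns place parallel faces in planes differing by $2r>0$, a single skeleton donates \emph{at most one} $k$-face to any fixed $V$. The quantity that must be controlled is the number of \emph{skeletons} that can be forced onto popular planes, which is a different (and much harder) count.

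The gap is not mere incompleteness; an argument of the shape you describe cannot close, because it never invokes any separation or distinctness of the skeletons, and the statement is false without such a hypothesis. Take $n=2$, $k=1$, and consider all squares whose four sides lie on the $2s$ grid lines $\{x=i\}$, $\{y=j\}$, $1\le i,j\le s$: these are the squares with center $\left(\frac{a+a'}{2},\frac{b+b'}{2}\right)$ and radius $\frac{a-a'}{2}$ where $a-a'=b-b'>0$, and there are $u\sim s^3/3$ of them. Any choice of one side per square places $u$ chosen sides on only $2s$ lines, so some line contains at least $u/(2s)\gtrsim u^{2/3}$ chosen sides, while the lemma would allow only $Cu^{5/8}$; this is a contradiction for $u$ large (a variant with radii confined to $[1,2]$ but repeated centers works as well). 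Consequently any correct proof must use that the centers are distinct (indeed, in this paper's application they are the $\delta$-separated grid points $x_1,\dots,x_u$, each carrying a single radius $\rho(x_i)$), and any weighted-random, greedy, or union-bound scheme that treats the skeletons as abstract objects with $\binom{n}{k}2^{n-k}$ interchangeable faces --- as yours does --- would apply verbatim to the configuration above and is therefore doomed. This also shows the exponent is not the outcome of a generic balance of ``(load per plane) $\times$ (number of planes) $\gtrsim u$''; it encodes the specific geometric counting that the proof in \cite{OS18} actually performs.
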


We may now introduce a new maximal operator that controls $M_\delta$ at a local level. To that end, we consider the dyadic lattice $\mathbb{Z}^2$ and, given a fixed point $z\in \mathbb{Z}^2$, we shall use the notation from the following definition.
\begin{definition}\label{def:delta-grid}
Let $z$ be any point in the lattice $\mathbb{Z}^2$. Associated to that point we define:
\begin{enumerate}
\item $Q_z$ will denote the square of sidelength 1 and lower left vertex $z$.

\item $Q_z^* = \{x_1,\ldots,x_u \}$ will be the set of centers of the smaller squares of sidelength $\delta$, $Q_z (1), \ldots, Q_z(u)$, generated by the grid $Q_z \cap \delta \Z^2$. We may assume from now on that $1/\delta$ is an integer and therefore $u=\frac{1}{\delta^2}$.
 
 \item $\rho: Q_z^* \rightarrow [1,2]\cap \delta \Z$ is a function that assigns a number as sidelenght for building squares with centers in the points in $Q^*_z$. We denote by $\Gamma$ the family of all possible choices of $\rho$.
 
 \item $ \{S^{\rho}_i\} =\{S(x_i,\rho(x_i))\}_{i=1}^u$ is the family of 1-skeletons with center $x_i$ and sidelength $2\rho(x_i)$.
 
 \item $\Phi$ will be a function given by the selection algorithm given by Lemma \ref{lem:chooseelement} that chooses one of the sides of 
 each $1$-skeleton:  
 \[
 \Phi (S^{\rho}_i) = l_i.
 \]
 \item As the family of sides $l_i$ chosen by the previous algorithm depends on $\rho$ and $Q_z$
we will denote it as
 \[
 \{l^{\rho}_{1,z},\ldots, l^{\rho}_{u,z}\}.
 \]

 \item Note that the main objects we are going to deal with are averages over $\delta$-neighborhoods of these line segments.  We will denote by $\ell^{\rho}_{i,z}$ the $\delta$-neighborhood of  $l^{\rho}_{i,z}$. Since the eccentricity parameter $\delta>0$ is fixed from the beginning we will omit it in the above notation.
 \item Given $x \in Q_z$, we will denote by $x^*$ the center of the square $Q_z(i)$ containing $x$. For notational convenience, we write $\Psi: Q_z \rightarrow Q_z^*$, $x \mapsto x^*$.
 This function maps each one of the squares $Q_z(i)$ to its center $x_i$.
  \end{enumerate}
\end{definition}

\begin{definition} \label{def:linearmaximal}  Let $Q_z$ be a fixed square given by a point in the lattice $\mathbb{Z}^2$. Consider a function $\rho\in\Gamma$ and $0<\delta <1$, if $f \in L^{1}_{\rm loc}(\rr^n)$ we define the  \textit{$(\rho,1)$-skeleton maximal function} with width $\delta$ as

\begin{center}
\begin{equation*}
{\widetilde{M}}^{1}_{\rho,\delta}f : Q_z \rightarrow \R,
\end{equation*}
\begin{equation*}
{\widetilde{M}}^{1}_{\rho,\delta}f(x)= \displaystyle\frac{1}{|\ell_{x,\delta}|} \int _ {\ell_{x,\delta}} f(y)\, dy,
\end{equation*}
\end{center}
where $\ell_{x,\delta}$ is the $\delta$-neighborhood of $\Phi_{\rho} (S_{k}(x^*, \rho(x^*)))$.
\end{definition}

By definition,  $\widetilde{M}^1_{\rho,\delta}$ is a linear operator, and from now on we will simple denote it by ${\widetilde{M}}_{\rho,\delta}$ .
 Let $C Q_z$ denote the square with the same center as $Q_z$ and side length $C$. Since the function $\rho$ only take values in the interval $[1,2]$, in the previous definition it is enough to consider functions $f$ supported on $7Q_z$, because if $x \in Q_z$ then $S_{k,\delta}(x^*,\rho(x^*)) \subseteq 7Q_z$.
The relevance of this operator comes from a pointwise inequality from \cite[Lemma 2.6]{OS18} involving
${\widetilde{M}}_{\rho,\delta}$ and $M_{\delta}$. More precisely, we have that 
\begin{equation}\label{eq:MvsMtilde}
\|M_{\delta}f\|_{L^p(Q_z,w)} \leq 3 \sup_{\rho\in \Gamma}\|\widetilde{M}_{\rho, 3\delta}f\|_{L^p(Q_z,w)}.
\end{equation}
Therefore, by obtaining a sufficient condition on the discrete maximal operator uniformly on $\rho$, we will also obtain sufficiency result for $M_{\delta}$, at least at a local level. 

The appropriate definition of an $A_p$ type class for the discrete maximal operator is the following.

\begin{definition}
Let $\delta>0$ be a fixed eccentricity parameter and let $w$ be a weight in $\mathbb{R}^2$ and let $1<p<\infty$. For a given $z\in \mathbb{Z}^2$ and $\rho \in \Gamma$, we define the skeleton $A_p$-like quantity (we omit the implicit $\delta$ in the following definition):
\begin{equation}\label{eq:Ap-rho-z}
  [w]_{A^S_{p,\rho,z}} := \max_{i=1}^u  \left(\frac{1}{|\ell^{\rho}_{i,z}|}\int_{Q_z(i)} w \right) \left( \avgint_{\ell^{\rho}_{i,z}} w^\frac{-p'}{p}\right)^{p-1},
\end{equation}
where each $\ell^\rho_{i,z}$ is the $\delta$-neighborhood of the edge chosen by the function $\Phi$ from Definition \ref{def:delta-grid}, item (5).
 Taking into account that we need a uniform control over all $z$ and all $\rho$, we define
\begin{equation}\label{eq:ApSkeleton}
[w]_{A^S_p}: = \sup_{z \in \mathbb{Z}^2}  \sup_{\rho} \max_{i=1}^u  \left(\frac{1}{|\ell^{\rho}_{i,z}|}\int_{Q_z(i)} w \right) \left( \avgint_{\ell^{\rho}_{i,z}} w^\frac{-p'}{p}\right)^{p-1}.
\end{equation}
As usual, we say that $w\in A^S_p$ if the above supremum is finite. Note that the geometry of the problem is reflected in the asymmetry on the averages.
\end{definition}

\begin{definition} (The case $p=1$)
 Let $w$ a weight in $\mathbb{R}^2$. For a given $z \in \Z^2$ and $\rho \in \Gamma$ we define de skeleton $A_1$-like quantity: 
 \[
[w]_{A^S_{1,\rho,z}} :=  \max_{i=1}^u \left( \frac{1}{|\ell^{\rho}_{i,z}|} \int_{Q_z(i)} w \right)\norm{w^{-1}}_{L^{\infty}(\ell^{\rho}_{i,z})}.
\]
To have a uniform control over all $z$ and all $\rho$, we define
\[
[w]_{A^S_{1}} := \sup_{z \in \Z^2} \sup_{\rho} \max_{i=1}^u \left( \frac{1}{|\ell^{\rho}_{i,z}|} \int_{Q_z(i)} w \right)\norm{w^{-1}}_{L^{\infty}(\ell^{\rho}_{i,z})}.
\]
\end{definition}

 We will prove in Proposition \ref{prop:ApProperties} that (as in the classical case) the limit when $p\to 1$ of the quantity $[w]_{A^S_p}$ for a weight $w\in A^S_1$ is precisely $[w]_{A^S_1}$.

We may now present the second main result in this article: a sufficient condition for the weighted boundedness of the maximal operator $\widetilde{M}_{\rho,\delta}$ and therefore also for $M_\delta$. The method of proof differs from the usual techniques from the theory of weights since there are no classical covering arguments, instead we use a duality argument that allows to pass from packing objects with good $L^{p'}$ norm to an $L^p$ bound of the maximal operator (see \cite[Proposition 22.4]{Mattila-book2015}). This forces, as far as we know, to only obtain a discrete family of values of $p$'s for the $L^p(w)$ bound (see the discussion in Section \ref{sec:extension-all-p} for further observations on this problem).

We have the following theorem. 

\begin{theorem}\label{thm:sufficient-local}
Let $w$ be a weight in $\mathbb{R}^2$. If $w\in A^S_{p,\rho,z}$ and $p'\in \mathbb{N}$, then 
\begin{equation}
\|\widetilde{M}_{\rho,\delta}f\|_{L^{p}(Q_z,w)} \leq  C \delta^{-\frac{5}{4p}}  [w]^{\frac{1}{p}}_{A^S_{p,\rho,z}} \norm{f}_{L^{p}(7Q_z,w)}.
\end{equation}
\end{theorem}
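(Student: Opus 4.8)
The plan is to exploit that $\widetilde{M}_{\rho,\delta}f$ is piecewise constant on the $\delta$-grid: for $x$ in the cell $Q_z(i)$ one has $\widetilde{M}_{\rho,\delta}f(x)=A_i$, where $A_i:=\frac{1}{|\ell^{\rho}_{i,z}|}\int_{\ell^{\rho}_{i,z}}|f|$ depends only on the chosen edge of the $i$-th skeleton. Writing $w(E)=\int_E w$, this gives at once
\[
\|\widetilde{M}_{\rho,\delta}f\|_{L^p(Q_z,w)}^p=\sum_{i=1}^u A_i^{p}\,w(Q_z(i)).
\]
So everything reduces to controlling this sum, and the point of the $A^S_{p,\rho,z}$ constant is exactly that each summand is comparable to $\int_{\ell^{\rho}_{i,z}}|f|^pw$.

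Second, I would estimate each $A_i$ by H\"older on the segment $\ell^{\rho}_{i,z}$ with the conjugate pair $(p,p')$ and the dual weight $\sigma:=w^{-p'/p}$, obtaining $A_i^{p}\le |\ell^{\rho}_{i,z}|^{-p}\,\sigma(\ell^{\rho}_{i,z})^{p-1}\int_{\ell^{\rho}_{i,z}}|f|^pw$. Multiplying by $w(Q_z(i))$ and recognizing the factor $\frac{w(Q_z(i))}{|\ell^{\rho}_{i,z}|}\big(\avgint_{\ell^{\rho}_{i,z}}\sigma\big)^{p-1}$, which is at most $[w]_{A^S_{p,\rho,z}}$ by definition of that constant as a maximum over $i$, yields the clean chain
\[
\|\widetilde{M}_{\rho,\delta}f\|_{L^p(Q_z,w)}^p\le [w]_{A^S_{p,\rho,z}}\sum_{i=1}^u\int_{\ell^{\rho}_{i,z}}|f|^pw=[w]_{A^S_{p,\rho,z}}\int_{7Q_z}\Big(\sum_{i=1}^u\mathbf{1}_{\ell^{\rho}_{i,z}}\Big)|f|^pw.
\]
Thus the entire theorem is reduced to a purely geometric, weight-free statement: a pointwise bound on the overlap function $\sum_i\mathbf{1}_{\ell^{\rho}_{i,z}}$.

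Third, and this is the step I expect to be the crux, I would show $\big\|\sum_i\mathbf{1}_{\ell^{\rho}_{i,z}}\big\|_{L^\infty}\lesssim\delta^{-5/4}$ using Lemma \ref{lem:chooseelement}. Because the centers lie in $\Z^2$-dilates of $\delta$ and the sidelengths $\rho(x_i)\in\delta\Z$, every chosen edge is supported on a grid line $\{x_1=a\}$ or $\{x_2=b\}$ with $a,b\in\delta\Z$; hence for a fixed $y$ the edges whose $\delta$-neighborhood contains $y$ live on a bounded number of such lines (at most three in each coordinate direction). By the selection property of Lemma \ref{lem:chooseelement} with $n=2$, $k=1$ and $u=\delta^{-2}$, each of these lines carries at most $C\,u^{1-3/8}=C\,u^{5/8}=C\delta^{-5/4}$ of the chosen edges, so the overlap at $y$ is $\lesssim\delta^{-5/4}$. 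Feeding this into the chain above and taking $p$-th roots gives $\|\widetilde{M}_{\rho,\delta}f\|_{L^p(Q_z,w)}\le C\delta^{-5/(4p)}[w]_{A^S_{p,\rho,z}}^{1/p}\|f\|_{L^p(7Q_z,w)}$, as desired. I note that this route does not seem to require $p'\in\nn$; the hypothesis $p'\in\nn$ in the statement is presumably tied to the alternative argument suggested by the surrounding discussion, namely dualizing $\widetilde{M}_{\rho,\delta}$ and expanding $\int\big(\sum_i b_i\mathbf{1}_{\ell^{\rho}_{i,z}}\big)^{p'}\sigma$ into an integer number of factors so that Lemma \ref{lem:chooseelement} can be applied to the $p'$-fold intersections of the edges. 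The delicate part there, and the reason I would prefer the single $L^\infty$ overlap count, is that controlling those weighted multiple intersections is more involved than the one pointwise estimate above.
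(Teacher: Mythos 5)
Your proof is correct, but it follows a genuinely different route from the paper's. The paper first restricts to the cells $\Psi^{-1}(E_{\pi_1})$ whose chosen edges are vertical, then invokes the duality reduction of Proposition \ref{pro:normaindicadoras}: the theorem is reduced to bounding $\norm{\sum_i t_i\1_{\ell_i}}_{L^{p'}(7Q_z,\,w^{1-p'})}$ for the weight-dependent coefficients $t_i=\frac{b_i}{|\ell_i|}\bigl(\int_{Q_z(i)}w\bigr)^{1/p}$ with $\sum_i b_i^{p'}=1$; this norm is then controlled by expanding the $p'$-th power as a sum over tuples $(i_1,\dots,i_{p'})$ --- exactly where $p'\in\nn$ enters --- applying discrete H\"older, and using Lemma \ref{lem:chooseelement} to bound by $(C\delta^{-5/4})^{p'-1}$ the number of tuples with $\ell_{i_1}\cap\dots\cap\ell_{i_{p'}}\neq\emptyset$. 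You use the same two ingredients (piecewise constancy of $\widetilde{M}_{\rho,\delta}f$ on the $\delta$-cells and Lemma \ref{lem:chooseelement}) but combine them without any duality: a direct H\"older estimate on each edge produces exactly the factor $\frac{w(Q_z(i))}{|\ell_i|}\bigl(\avgint_{\ell_i}w^{-p'/p}\bigr)^{p-1}\le[w]_{A^S_{p,\rho,z}}$, and the remaining geometric input is the pointwise overlap bound $\norm{\sum_i\1_{\ell_i}}_{L^\infty}\lesssim\delta^{-5/4}$, which does follow from the lemma since every chosen edge lies on one of the $\delta$-separated grid lines and only $O(1)$ such lines pass within $\delta$ of a given point (your count is valid, and it even handles both orientations at once, so the splitting into $E_{\pi_1},E_{\pi_2}$ becomes unnecessary). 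Quantitatively the two arguments give the same constant --- note that for $w\equiv1$ one has $[1]_{A^S_{p,\rho,z}}\approx\delta$, so both recover the sharp unweighted exponent $\delta^{-1/(4p)}$ of Theorem \ref{thm:main-OS18} --- but yours is more elementary and, as you observe, never uses $p'\in\nn$: it works for every $1<p<\infty$, and for $p=1$ the same scheme with $\int_{\ell_i}|f|\le\norm{w^{-1}}_{L^{\infty}(\ell_i)}\int_{\ell_i}|f|w$ recovers Corollary \ref{cor: case p=1} directly, without the limiting argument through Proposition \ref{prop:ApProperties}. This removes precisely the restriction that the paper flags in Section \ref{sec:extension-all-p} as an artifact of its method; fed into the proof of Theorem \ref{thm:sufficient}, your argument extends \eqref{eq:LpBoundM} to all $1\le p<\infty$.
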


Note that this bound doesn't blow up when $p$ goes to 1. This, together with the result in Proposition \ref{prop:ApProperties} mentioned above, allows us to use a limiting argument to extend the result to the endpoint, namely to $L^1$.
\begin{corollary} \label{cor: case p=1}  Let $w$ be a weight in $\rr^2$. If $w \in A^S_{1,\rho,z}$, then 
\[
\|\widetilde{M}_{\rho,\delta}f\|_{L^{1}(Q_z,w)} \leq  C \delta^{-\frac{5}{4}}  [w]_{A^S_{1,\rho,z}} \norm{f}_{L^{1}(7Q_z,w)}.
\]
\end{corollary}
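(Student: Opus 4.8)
The plan is to deduce the $L^1$ bound directly from Theorem \ref{thm:sufficient-local} by letting $p\downarrow 1$, exploiting the crucial fact that neither the geometric constant $\delta^{-5/(4p)}$ nor the characteristic $[w]^{1/p}_{A^S_{p,\rho,z}}$ degenerates in this limit. Since $\widetilde{M}_{\rho,\delta}$ is a positive averaging operator and $|\widetilde{M}_{\rho,\delta}f|\le \widetilde{M}_{\rho,\delta}|f|$ pointwise, I may assume $f\ge 0$. Both sides of the claimed inequality are monotone in $f$ and continuous along increasing sequences by the monotone convergence theorem (the operator is integration against a nonnegative kernel), so it suffices to establish the estimate for $f$ bounded with compact support, the general case $f\in L^1(7Q_z,w)$ then following by approximating from below with the truncations $f_N=\min(f,N)\mathbf{1}_{7Q_z}$. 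For such bounded $f$ we have $f\in L^p(7Q_z,w)$ for every $p\in[1,\infty]$, because $w(7Q_z)<\infty$ by local integrability of $w$, and likewise $\widetilde{M}_{\rho,\delta}f$ is bounded and hence lies in every $L^p(Q_z,w)$; this is exactly what makes the theorem applicable along a sequence of exponents approaching $1$.

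Next I would select the admissible exponents. Writing $p'=p/(p-1)$, the constraint $p'\in\mathbb{N}$ is met by $p_m=m/(m-1)$, for which $p_m'=m$ and $p_m\downarrow 1$ as $m\to\infty$. Applying Theorem \ref{thm:sufficient-local} with $p=p_m$ gives, for each $m$,
\[
\|\widetilde{M}_{\rho,\delta}f\|_{L^{p_m}(Q_z,w)} \le C\,\delta^{-\frac{5}{4p_m}}\,[w]^{1/p_m}_{A^S_{p_m,\rho,z}}\,\|f\|_{L^{p_m}(7Q_z,w)},
\]
and I then let $m\to\infty$ and identify the limit of each factor. For the two norms I use dominated convergence on the finite measure spaces: since $g:=\widetilde{M}_{\rho,\delta}f$ and $f$ are bounded and $w(Q_z),w(7Q_z)<\infty$, the integrands $g^{p_m}$ and $f^{p_m}$ are uniformly dominated for $p_m\in[1,p_1]$, so $\int g^{p_m}w\to\int g\,w$ and $\int f^{p_m}w\to\int f\,w$, whence $\|g\|_{L^{p_m}(Q_z,w)}\to\|g\|_{L^1(Q_z,w)}$ and $\|f\|_{L^{p_m}(7Q_z,w)}\to\|f\|_{L^1(7Q_z,w)}$. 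For the geometric factor, $\delta^{-5/(4p_m)}\to\delta^{-5/4}$, and the convergence $[w]^{1/p_m}_{A^S_{p_m,\rho,z}}\to[w]_{A^S_{1,\rho,z}}$ is precisely the local, quantitative content of Proposition \ref{prop:ApProperties}: for each fixed edge the quantity $\big(\avgint_{\ell^\rho_{i,z}}w^{-p'/p}\big)^{p-1}$ is a normalized $L^{1/(p-1)}$ average of $w^{-1}$ over $\ell^\rho_{i,z}$, which increases to $\|w^{-1}\|_{L^\infty(\ell^\rho_{i,z})}$ as $p\downarrow 1$, so the finite maximum over $i$ converges to $[w]_{A^S_{1,\rho,z}}$, and the exponent $1/p_m\to 1$ preserves the limit. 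Passing to the limit in the displayed inequality yields the bound for bounded compactly supported $f$, and the monotone convergence reduction of the first paragraph extends it to all of $L^1(7Q_z,w)$.

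The only genuinely delicate point is the interchange of the limit $p_m\downarrow 1$ with the norms, and this is exactly why the reduction to bounded $f$ is essential: for a general $f\in L^1(7Q_z,w)$ that fails to lie in any $L^q$ with $q>1$, the right-hand side $\|f\|_{L^{p_m}(7Q_z,w)}$ is infinite for every $m$ and Theorem \ref{thm:sufficient-local} conveys no information. It is the positivity of $\widetilde{M}_{\rho,\delta}$, which makes the monotone convergence step available, that circumvents this obstruction and allows the $L^1$ estimate to be recovered with the sharp constant $C\delta^{-5/4}[w]_{A^S_{1,\rho,z}}$.
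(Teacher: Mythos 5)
Your proposal is correct and follows essentially the same route as the paper: the paper proves the corollary by exactly this limiting argument, letting $p\to 1$ along exponents with $p'\in\mathbb{N}$ in Theorem \ref{thm:sufficient-local} and invoking Proposition \ref{prop:ApProperties} to identify $\lim_{p\to 1^+}[w]_{A^S_{p,\rho,z}}=[w]_{A^S_{1,\rho,z}}$, noting that $\delta^{-5/(4p)}$ and the constant stay bounded. Your write-up merely fills in the approximation details (truncation to bounded $f$, monotone/dominated convergence of the $L^{p_m}$ norms) that the paper leaves implicit, and these details are handled correctly.
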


As a consequence of this theorem, by using inequality \eqref{eq:MvsMtilde} and the bounded overlap of the family $\{7Q_z\}_{z\in \mathbb{Z}^2}$, we obtain the desired sufficient condition for the skeleton maximal function. We remark here that the proof provides, quite surprisingly, a precise quantitative dependence on the $A_p$ constant. 

\begin{theorem}\label{thm:sufficient} 
Let $w\in A^S_p$ in $\mathbb{R}^2$ for $p$ such that $p'\in \mathbb{N}$ or $p=1$. For any fixed $\delta>0$, we have that
\begin{equation}\label{eq:LpBoundM}
\|M_\delta f\|_{L^p(w)}\le C \delta^{-5/4p}[w]^{1/p}_{A^S_p}\|f\|_{L^p(w)}.
\end{equation}
\end{theorem}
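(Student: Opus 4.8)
The plan is to derive Theorem \ref{thm:sufficient} as a more or less immediate consequence of Theorem \ref{thm:sufficient-local} together with the pointwise domination \eqref{eq:MvsMtilde} and a standard summation over the lattice $\mathbb{Z}^2$. First I would fix $z\in\mathbb{Z}^2$ and estimate $\|M_\delta f\|_{L^p(Q_z,w)}$ locally. By \eqref{eq:MvsMtilde} we have
\begin{equation*}
\|M_\delta f\|_{L^p(Q_z,w)}\le 3\sup_{\rho\in\Gamma}\|\widetilde{M}_{\rho,3\delta}f\|_{L^p(Q_z,w)},
\end{equation*}
and then Theorem \ref{thm:sufficient-local} applied with the parameter $3\delta$ bounds each term on the right by a constant times $(3\delta)^{-5/(4p)}[w]^{1/p}_{A^S_{p,\rho,z}}\|f\|_{L^p(7Q_z,w)}$. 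Since $[w]_{A^S_{p,\rho,z}}\le [w]_{A^S_p}$ uniformly in $z$ and $\rho$ by the definition \eqref{eq:ApSkeleton}, the supremum over $\rho$ is controlled by the global constant, and the factor $(3\delta)^{-5/(4p)}$ is comparable to $\delta^{-5/(4p)}$ up to a constant absorbed into $C$. This yields, for each $z$,
\begin{equation}\label{eq:local-to-global}
\|M_\delta f\|^p_{L^p(Q_z,w)}\le C^p\,\delta^{-5/4}\,[w]_{A^S_p}\,\|f\|^p_{L^p(7Q_z,w)}.
\end{equation}

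Next I would sum \eqref{eq:local-to-global} over all $z\in\mathbb{Z}^2$. The squares $\{Q_z\}_{z\in\mathbb{Z}^2}$ tile $\mathbb{R}^2$ (up to boundaries of measure zero), so $\sum_z\|M_\delta f\|^p_{L^p(Q_z,w)}=\|M_\delta f\|^p_{L^p(w)}$. On the right-hand side, the enlarged squares $\{7Q_z\}_{z\in\mathbb{Z}^2}$ have bounded overlap: a point of $\mathbb{R}^2$ lies in at most a fixed number $N$ (depending only on the dimension, here $N\le 49$) of the sets $7Q_z$. Consequently $\sum_z\|f\|^p_{L^p(7Q_z,w)}\le N\,\|f\|^p_{L^p(w)}$. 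Combining these two facts gives
\begin{equation*}
\|M_\delta f\|^p_{L^p(w)}\le N\,C^p\,\delta^{-5/4}\,[w]_{A^S_p}\,\|f\|^p_{L^p(w)},
\end{equation*}
and taking $p$-th roots produces the claimed bound \eqref{eq:LpBoundM} after relabelling the constant. For the endpoint $p=1$ the same scheme applies verbatim, using Corollary \ref{cor: case p=1} in place of Theorem \ref{thm:sufficient-local}.

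I do not expect any genuine obstacle here, since all the analytic content is already packaged in Theorem \ref{thm:sufficient-local}; the remaining work is purely organizational. The one point that requires a small amount of care is the bounded-overlap constant for $\{7Q_z\}$ and making sure the $\rho$-supremum on the local scale is dominated by the global $A^S_p$ constant: this is exactly why the definition \eqref{eq:ApSkeleton} takes a supremum over both $z$ and $\rho$, so the passage from $[w]_{A^S_{p,\rho,z}}$ to $[w]_{A^S_p}$ is immediate. A minor bookkeeping subtlety is that \eqref{eq:MvsMtilde} converts the width $\delta$ into $3\delta$, but since $\delta$ is fixed throughout and the dependence on the width is the explicit power $\delta^{-5/(4p)}$, this only alters the multiplicative constant. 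Hence the proof is a clean concatenation of the local estimate, the pointwise comparison, and a tiling-plus-finite-overlap argument.
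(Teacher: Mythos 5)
Your proposal is correct and follows essentially the same route as the paper: localize via \eqref{eq:MvsMtilde}, invoke Theorem \ref{thm:sufficient-local} with the global constant $[w]_{A^S_p}$ dominating each $[w]_{A^S_{p,\rho,z}}$, and sum over the lattice using the bounded overlap of $\{7Q_z\}$. The only cosmetic difference is at the endpoint: you invoke Corollary \ref{cor: case p=1} directly, while the paper appeals to the limiting argument of Proposition \ref{prop:ApProperties} (which is what produces that corollary in the first place), so the two are interchangeable.
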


\subsection{Outline}
The paper is organized as follows. In Section \ref{sec:prelim} we introduce some extra notation and a fundamental proposition on packing of thin tubes yielding norm bounds for our maximal operator. In Section \ref{sec:proofs} we present the proofs of our results.

\section{Preliminaries}\label{sec:prelim}

In this section we introduce some extra notation necessary to our proofs. Recall that the algorithm from Lemma \ref{lem:chooseelement} chooses some preferred sides on each square. So each one of the chosen sides is either a vertical line or an horizontal line. We define 
 \[
 E_{\pi_1}:= \{ x_i \in Q^*_z :  l^{\rho}_{i,z} \, \, \text{is vertical}\}
 \]
 \[
 E_{\pi_2}:= \{ x_i \in Q^*_z :  l^{\rho}_{i,z} \, \, \text{ is horizontal}\}
 \]
 By Definition \ref{def:delta-grid} we have that $\Psi^{-1}(E_{\pi_j})$, $j=1,2$ is a partition of $Q_z$. 

\medskip

The next proposition contains a key result that allows to reduce the problem of finding norm bounds for a maximal operator to the problem of proving an estimate for the sum of indicator functions. A version of this proposition for the Kakeya maximal operator can be found in \cite{Mattila-book2015} and an adaptation of this argument to the unweighted skeleton maximal operator is presented in \cite{OS18}.

 \begin{proposition}\label{pro:normaindicadoras}
Let $0<\delta <1$, $z \in \Z^2$,  $\rho\in \Gamma$ and consider the squares $Q_z(i)$, $1\le i\le u$ from Definition \ref{def:delta-grid}. Fix $1<p<\infty$ and a weight $w$ on $Q_z$. Now consider  only the sides $\ell^{\rho}_{i,z}= \ell_i$ such that $x_i\in  E_{\pi_1}$ ($z,\rho$ are fixed) and denote $|E_{\pi_1}|=v$. Let 
$\Psi^{-1}(E_{\pi_1})=\bigcup_{j=1}^v Q_z(i_j)$.
Suppose that there exists some constant $0<K<\infty$ such that 
\begin{equation*}
\norm{\sum_{i=1}^v t_i\1_{\ell_i}}_{L^{p'} (7 Q_z , w^{1-p'})} \leq K.
\end{equation*}

Then we have
\[
 \norm{\widetilde{M}_{\rho,\delta}f}_{L^p(\Psi^{-1}(E_{\pi_1}), w)} \leq K  \norm{f}_{L^p(7 Q_z, w)}.
\]
\end{proposition}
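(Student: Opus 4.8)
The plan is to use the fact that $\widetilde{M}_{\rho,\delta}f$ is piecewise constant on the $\delta$-grid and then run a weighted duality argument of the type in \cite[Proposition 22.4]{Mattila-book2015}. For $x\in Q_z(i)$ we have $x^\ast=x_i$, so $\widetilde{M}_{\rho,\delta}f(x)=a_i:=\avgint_{\ell_i}f$ does not depend on $x\in Q_z(i)$. Writing $\mu_i:=\int_{Q_z(i)}w$ and summing over $i\in E_{\pi_1}$, the left-hand side becomes a weighted discrete norm,
\begin{equation*}
\norm{\widetilde{M}_{\rho,\delta}f}_{L^p(\Psi^{-1}(E_{\pi_1}),w)}^p=\sum_{i}|a_i|^p\mu_i=\norm{(a_i)}_{\ell^p(\mu)}^p,
\end{equation*}
so the whole statement reduces to the discrete estimate $\norm{(a_i)}_{\ell^p(\mu)}\le K\,\norm{f}_{L^p(7Q_z,w)}$, where $\ell^p(\mu)$ is the weighted sequence space over $\{i_1,\dots,i_v\}$ with masses $\mu_i$. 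Since $|\widetilde{M}_{\rho,\delta}f|\le\widetilde{M}_{\rho,\delta}|f|$, I may assume $f\ge0$ from the outset.

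The heart of the argument is to dualize this weighted $\ell^p$ norm and unfold the average. I would write $\norm{(a_i)}_{\ell^p(\mu)}=\sup\{\sum_i a_ib_i\mu_i:\ \norm{(b_i)}_{\ell^{p'}(\mu)}\le1\}$ with nonnegative $b_i$, and then set $t_i:=b_i\mu_i/|\ell_i|$, so that
\begin{equation*}
\sum_i a_ib_i\mu_i=\sum_i\frac{b_i\mu_i}{|\ell_i|}\int_{\ell_i}f=\int_{7Q_z}\Big(\sum_i t_i\1_{\ell_i}\Big)f,
\end{equation*}
the restriction to $7Q_z$ being legitimate because each $\ell_i\subseteq 7Q_z$. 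Factoring $f=(fw^{1/p})w^{-1/p}$ and applying H\"older with conjugate exponents $p,p'$, together with the identity $-p'/p=1-p'$, gives
\begin{equation*}
\int_{7Q_z}\Big(\sum_i t_i\1_{\ell_i}\Big)f\le\Big\|\sum_i t_i\1_{\ell_i}\Big\|_{L^{p'}(7Q_z,\,w^{1-p'})}\,\norm{f}_{L^p(7Q_z,w)}.
\end{equation*}

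It then remains to invoke the hypothesis to bound the first factor by $K$ and take the supremum over $b$. The step that requires the most care — and which I regard as the real content hidden behind the routine duality — is matching the normalizations: the dual constraint $\norm{(b_i)}_{\ell^{p'}(\mu)}\le1$ translates, via $b_i=t_i|\ell_i|/\mu_i$, into $\sum_i t_i^{p'}|\ell_i|^{p'}\mu_i^{1-p'}\le1$, and I must verify that this is exactly the admissible family of coefficients for which the assumed packing bound $\norm{\sum_i t_i\1_{\ell_i}}_{L^{p'}(w^{1-p'})}\le K$ is available. Granting this, taking the supremum over admissible $(b_i)$ yields $\norm{(a_i)}_{\ell^p(\mu)}\le K\norm{f}_{L^p(7Q_z,w)}$, which is the claim. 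Finally, I note that nothing in the argument used that the chosen sides are vertical, so the identical reasoning applies verbatim to the horizontal collection $E_{\pi_2}$.
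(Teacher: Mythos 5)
Your proposal is correct and follows essentially the same route as the paper's proof: both exploit that $\widetilde{M}_{\rho,\delta}f$ is constant on each $Q_z(i)$, dualize the resulting discrete $\ell^p$ norm, unfold the averages into $\int_{7Q_z}\bigl(\sum_i t_i\1_{\ell_i}\bigr)f$, and finish with H\"older against $w^{1/p}w^{-1/p}$ and the packing hypothesis. Your normalization check indeed closes the only delicate point, since your constraint $\sum_i t_i^{p'}|\ell_i|^{p'}\mu_i^{1-p'}\le 1$ is exactly equivalent (via $\beta_i=b_i\mu_i^{1/p'}$, using $1/p+1/p'=1$) to the paper's parametrization $t_i=\frac{b_i}{|\ell_i|}\bigl(\int_{Q_z(i)}w\bigr)^{1/p}$ with $\sum_i b_i^{p'}=1$.
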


\begin{proof}
For the sake of clarity, we relabel the cubes in $\Psi^{-1}(E_{\pi_1})$ as $Q_z(i)$, $i=1,\ldots, v$ and compute the integral
\begin{eqnarray*}
\int_{\Psi^{-1}(E_{\pi_1})} \widetilde{M}_{\rho,\delta} f(x)^p w(x) \, dx &=& \displaystyle\sum_{i=1}^v \int_{Q_z(i)} \widetilde{M}_{\rho,\delta} f(x)^p w(x)\, dx\\& =& \sum_{i=1}^v \widetilde{M}_{\rho,\delta}f(x_i)^p w(Q_z(i)).   
\end{eqnarray*}
Let us wirte $\ell_i$ instead of $\ell^{\rho}_{i,z}$. By duality there is a choice of nonegative numbers $b_i$, $i=1,\dots, v$ with $\displaystyle\sum_{i=1}^v b^{p'}_i=1$, such that
\begin{eqnarray*}
\left (\int_{\Psi^{-1}(E_{\pi_1})} \widetilde{M}_{\rho,\delta}f(x)^p w(x) \, dx \right)^{1/p} &= & \left( \sum_{i=1}^v (\widetilde{M}_{\rho,\delta}f(x_i) w(Q_z(i))^{1/p})^p  \right)^{1/p} \\ &=&
\sum_i^v \widetilde{M}_{\rho,\delta}f(x_i) w(Q_z(i))^{1/p} b_i \\&=& \sum_{i=1}^v \frac{b_i}{|\ell_i|}  \left( \int_{Q_z(i)} w \, dx  \right)^{1/p}\left( \int_{\ell_i} f(y) \, dy\right) \\&=& \sum_{i=1}^v t_i \left( \int_{\ell_i} f(y)\, dy\right)
\end{eqnarray*}
where we write  $t_i = \frac{b_i}{|\ell_i|}  \left( \int_{Q_z(i)} w \, dx  \right)^{1/p} $.

Then, we obtain that
\begin{eqnarray*}
\norm{\widetilde{M}_{\rho,\delta}f}_{L^p(\Psi^{-1}(E_{\pi_1}))} &\leq& \displaystyle\sum_{i=1}^v t_i \left( \int_{\ell_i} f(y) dy\right) \\ &\leq& \int_{7Q_z} \left( \sum_{i=1}^v t_i \1_{\ell_i}\right) f w^{1/p} w^{-1/p} \,dx \\ &\leq& \left(\int_{7Q_z} f^p w \, dx \right)^{1/p}\left(\int_{7Q_z} \left(\sum_{i=1}^v t_i \1_{\ell_i}\right)^{p'} w^{-p'/p} \right)^{1/p'} \\ &\leq& \norm{f}_{L^p(7Q_z, w)} \norm{\sum_{i=1}^v t_i \1_{\ell_i}}_{L^{p' }(7Q_z, w^{-p'/p})}\\
&\leq& K \norm{f}_{L^p(7Q_z, w)}
\end{eqnarray*}
since, by hypothesis, the last norm of the sum of indicator functions is bounded. The proof is complete.

\end{proof}

We include here some properties of the weights.

\begin{proposition}\label{prop:ApProperties} 
Let $w \in A^{S}_{p}$ for some $1 \leq p <\infty$. Then
\begin{enumerate}
    \item The classes $A^{S}_{p}$ are increasing as $p$ increases; precisely, for $1\leq p < q < \infty$ we have
    \[
     \left[ w \right]_{A^{S}_{q}} \leq \left[ w \right]_{A^{S}_{p}}.
    \]
    \item $\displaystyle \lim_{p \rightarrow 1^{+}}\left[w \right]_{A^{S}_{p}}=  \left[ w \right]_{A^{S}_{1}}$.
\end{enumerate}
\end{proposition}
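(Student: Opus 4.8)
The plan is to mirror the two classical facts about Muckenhoupt weights — monotonicity of the $A_p$ classes in $p$ and the convergence of the $A_p$ constant to the $A_1$ constant — but carried out at the level of the localized quantity $[w]_{A^S_{p,\rho,z}}$ from \eqref{eq:Ap-rho-z}, and then take suprema over $z\in\mathbb{Z}^2$ and $\rho\in\Gamma$ at the very end. Since every quantity involved is built from the same fixed geometric data (the edges $\ell^\rho_{i,z}$ and the squares $Q_z(i)$), both statements reduce to pointwise comparisons on a single edge and its associated square, uniformly in $i$, $z$, $\rho$.

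For part (1), fix $z$, $\rho$, and an index $i$, and abbreviate $\ell=\ell^\rho_{i,z}$. The factor $\tfrac{1}{|\ell|}\int_{Q_z(i)} w$ is independent of the exponent, so the whole content lies in comparing $\bigl(\avgint_{\ell} w^{-p'/p}\bigr)^{p-1}$ for two exponents $1\le p<q<\infty$. First I would verify the correct direction of the Jensen/power-mean inequality: writing $\sigma=w^{-1}$, the quantity is $\bigl(\avgint_\ell \sigma^{\,1/(p-1)}\bigr)^{p-1}$ after substituting $p'/p=1/(p-1)$, and the map $p\mapsto \bigl(\avgint_\ell \sigma^{1/(p-1)}\bigr)^{p-1}$ is monotone. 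The standard computation shows this is nonincreasing in $p$ (equivalently, the $\|\sigma\|_{L^s}$-type average with exponent $s=1/(p-1)$ shrinks as $s$ decreases, i.e. as $p$ grows), so each localized factor at exponent $q$ is dominated by the one at exponent $p$. Taking the max over $i$ and then the double supremum over $z$ and $\rho$ preserves the inequality, giving $[w]_{A^S_q}\le[w]_{A^S_p}$. The $p=1$ endpoint of the comparison is handled by replacing the average by $\|w^{-1}\|_{L^\infty(\ell)}$, which dominates every $L^s$ average, so the monotone chain extends down to $A^S_1$.

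For part (2), the heart is again local: I would show that for fixed $i,z,\rho$,
\[
\lim_{p\to 1^+}\Bigl(\avgint_{\ell} w^{-p'/p}\Bigr)^{p-1}=\bigl\lVert w^{-1}\bigr\rVert_{L^\infty(\ell)},
\]
which is exactly the classical fact that $\|\sigma\|_{L^s(\ell,\,d\mu/\mu(\ell))}\to\|\sigma\|_{L^\infty(\ell)}$ as $s=1/(p-1)\to\infty$, applied with $\sigma=w^{-1}$. Multiplying by the exponent-independent prefactor $\tfrac{1}{|\ell|}\int_{Q_z(i)}w$ gives convergence of each summand, hence of the maximum over the finitely many indices $i=1,\dots,u$, to $[w]_{A^S_{1,\rho,z}}$. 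The inequality $\lim_{p\to1^+}[w]_{A^S_{p,\rho,z}}\ge[w]_{A^S_{1,\rho,z}}$ then follows termwise, while the reverse inequality comes from part (1) (the constant is nonincreasing in $p$, so its limit from the right is bounded above by any value and, combined with the lower bound, pinned to $[w]_{A^S_1}$).

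The one genuine subtlety — and the step I would treat most carefully — is the interchange of the limit $p\to1^+$ with the two suprema over $z\in\mathbb{Z}^2$ and $\rho\in\Gamma$ in passing from the localized statement to $[w]_{A^S_p}$. For a fixed pair $(z,\rho)$ there are only finitely many edges, so the max over $i$ commutes with the limit for free; but $\Gamma$ and especially $\mathbb{Z}^2$ are infinite, so $\sup$ and $\lim$ need not commute in general. Here monotonicity from part (1) rescues the argument: since $p\mapsto[w]_{A^S_p}$ is nonincreasing, the limit as $p\to1^+$ equals the supremum over $p>1$, i.e. $\lim_{p\to1^+}[w]_{A^S_p}=\sup_{p>1}\sup_{z,\rho}\max_i(\cdots)$, and a double supremum may be freely reordered. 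Thus $\lim_{p\to1^+}[w]_{A^S_p}=\sup_{z,\rho}\sup_{p>1}\max_i(\cdots)$, and the inner $\sup_{p>1}$ over the monotone family equals $\lim_{p\to1^+}$ of the localized quantity, which by the local computation is $[w]_{A^S_{1,\rho,z}}$; taking $\sup_{z,\rho}$ recovers $[w]_{A^S_1}$ exactly. This monotonicity-driven reordering is what makes the global limit clean despite the infinite index sets.
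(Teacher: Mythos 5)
Your proposal is correct, and it splits naturally into two halves when compared with the paper. Part (1) is essentially the paper's own argument: the paper also reduces to the single-edge comparison $\left(\avgint_{\ell} w^{1-q'}\right)^{q-1} \le \left(\avgint_{\ell} w^{1-p'}\right)^{p-1}$, proved by H\"older's inequality with exponent $r=\frac{p'-1}{q'-1}$ (which is exactly your power-mean monotonicity in $s=1/(p-1)$), handles $p=1$ by the same $L^\infty$ domination, and then takes $\max_i$ and the suprema over $z,\rho$. Part (2) reaches the same conclusion by a genuinely different mechanism for the limit/supremum interchange. The paper argues in $\varepsilon$-$\delta$ style: it fixes $\varepsilon>0$, picks a triple $(z,\rho,i)$ nearly extremizing $[w]_{A^S_1}$, invokes $\norm{w^{-1}}_{L^r(\ell)}\to\norm{w^{-1}}_{L^\infty(\ell)}$ on that particular edge to get $[w]_{A^S_p}\ge [w]_{A^S_1}-\varepsilon$ for $p$ close to $1$, and combines this with the upper bound $[w]_{A^S_p}\le[w]_{A^S_1}$ from part (1). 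You instead use monotonicity to rewrite $\lim_{p\to1^+}$ as $\sup_{p>1}$ and then freely commute the suprema over $p$, $z$, $\rho$ and $i$, reducing everything to the single-edge limit; both routes ultimately rest on the same classical fact that normalized $L^s$ norms increase to the $L^\infty$ norm. What your route buys is robustness: there are no additive errors to track, so you sidestep two points where the paper's argument needs (minor, fixable) care --- the threshold on $p$ must be chosen for the specific near-extremizing edge, so the order of quantifiers matters, and the error term $\tfrac{\varepsilon}{2}$ in the paper's final display should really carry the prefactor $\frac{1}{|\ell^{\rho}_{i,z}|}\int_{Q_z(i)}w$, which need not be bounded by $1$. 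What the paper's route buys is directness: it never needs the observation that the localized constant is itself monotone in $p$, only the global monotonicity, and it stays entirely within the standard approximation-of-the-supremum template.
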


\begin{proof}
 Let $1<p<q$, we shall prove that $A^{S}_{p} \subseteq A^{S}_{q}$. First, observe that: if $p<q$, then $0< q'-1<p'-1<\infty$ and if $1< r=\frac{p'-1}{q'-1}$, then $r'=\frac{p'-1}{p'-q'}$. Then 
    \begin{eqnarray*}
    \left(\frac{1}{|\ell^{\rho}_{i,z}|}\int_{\ell^{\rho}_{i,z}} w^{1-q'}\right)^{q-1} 
    &=& \frac{1}{|\ell^{\rho}_{i,z}|^{q-1}} \left(  \int_{\ell^{\rho}_{i,z}} (w^{-1})^{q'-1}\1_{\ell^{\rho}_{i,z}}\right)^{\frac{1}{q'-1}} 
    \\ &\leq& \frac{1}{|\ell^{\rho}_{i,z}|^{q-1}}\left(\left( \int_{\ell^{\rho}_{i,z}} (w^{-1})^{r(q'-1)}\right)^{1/r} \left( \int_{\ell^{\rho}_{i,z}} \1^{r'}_{\ell^{\rho}_{i,z}}\right)^{1/r'}\right)^{\frac{1}{q'-1}} 
    \\ &\leq& \frac{1}{|\ell^{\rho}_{i,z}|^{q-1}}\left( \int_{\ell^{\rho}_{i,z}} (w^{-1})^{p'-1}\right)^{\frac{1}{p'-1}} |\ell^{\rho}_{i,z}|^{\frac{p'-q'}{p'-1}\frac{1}{q'-1}} 
    \\ &\leq&
 \frac{1}{|\ell^{\rho}_{i,z}|^{q-1}}\left( \int_{\ell^{\rho}_{i,z}} (w^{-1})^{p'-1}\right)^{\frac{1}{p'-1}} |\ell^{\rho}_{i,z}|^{q-p} 
 \\ &\leq&  \frac{1}{|\ell^{\rho}_{i,z}|^{p-1}}\left( \int_{\ell^{\rho}_{i,z}} w^{1-p'}\right)^{p-1},
    \end{eqnarray*}
    for every $i = 1,\ldots, u$, $\rho \in \Gamma$ and $z \in \Z^2$.
 Since $w \in A^{S}_{p}$, 
we have that
\begin{eqnarray*}
   \left[ w \right]_{A^{S}_q} & = & \sup_{z \in \mathbb{Z}^2}  \sup_{\rho} \max_{i=1}^u  \left( \frac{1}{|\ell^{\rho}_{i,z}|}\int_{Q_z(i)} w dx \right) \left(\frac{1}{|\ell^{\rho}_{i,z}|}\int_{\ell^{\rho}_{i,z}} w^{1-q'}\right)^{q-1} \\
   & \le &\sup_{z \in \mathbb{Z}^2}  \sup_{\rho} \max_{i=1}^u 
    \left( \frac{1}{|\ell^{\rho}_{i,z}|}\int_{Q_z(i)} w dx \right) \left(\frac{1}{|\ell^{\rho}_{i,z}|}\int_{\ell^{\rho}_{i,z}} w^{1-p'}\right)^{p-1} \\
    & = &  \left[ w \right]_{A^{S}_p}  <\infty.
\end{eqnarray*}

For the case $p=1$, we use that
\[
 \left(\frac{1}{|\ell^{\rho}_{i,z}|}\int_{\ell^{\rho}_{i,z}} w^{1-q'}\right)^{q-1}  \leq \sup_{x \in \ell^{\rho}_{i,z}} w(x)^{-1} =
\norm{w^{-1}}_{L^\infty (\ell^{\rho}_{i,z})}.
\]

To prove the second item, using 1, we have $[w]_{A^{S}_p} \leq [w]_{A^{S}_1}$ for all $p>1$.
It is enough to prove that given $\varepsilon>0$ there exists $\delta>0$ such that 
\[
\left[ w \right]_{A^{S}_p} \geq \left[ w \right]_{A^{S}_1} -\varepsilon, 
\]
for any $p<1+\delta$.
Observe that
\begin{align*}
\left(\frac{1}{|\ell^{\rho}_{i,z}|}\int_{\ell^{\rho}_{i,z}} w^{1-p'} dx  \right)^{p-1} 
&= \left(\frac{1}{|\ell^{\rho}_{i,z}|}\int_{\ell^{\rho}_{i,z}} (w^{-1})^{p'-1} dx  \right)^{\frac{1}{p'-1}} 
\\ &= \frac{1}{|\ell^{\rho}_{i,z}|^{p-1}} \norm{w^{-1}}_{L^{p'-1}(\ell^{\rho}_{i,z})}
\\ &=  \frac{1}{|\ell^{\rho}_{i,z}|^{\frac{1}{r}}} \norm{w^{-1}}_{L^{r}(\ell^{\rho}_{i,z})},
\end{align*}
where in the last line we write $r=p'-1$.
It is known that $\norm{f}_{L^r} \rightarrow \norm{f}_{L^\infty}$ if $r \rightarrow \infty$.
Then, given $\varepsilon>0$, there exists $N \in \mathbb{N}$ such that if $r > N$, 
\begin{eqnarray*}
\norm{w^{-1}}_{L^\infty(\ell^{\rho}_{i,z})} \leq \frac{1}{|\ell^{\rho}_{i,z}|^{1/r}} \norm{w^{-1}}_{L^r(\ell^{\rho}_{i,z})} + \tfrac{\varepsilon}{2}.
\end{eqnarray*}
Or, we can say there exists $\delta>0$ such that 
\[
\norm{w^{-1}}_{L^\infty(\ell^{\rho}_{i,z})} \leq \left(\frac{1}{|\ell^{\rho}_{i,z}|}\int_{\ell^{\rho}_{i,z}} w^{1-p'} dx  \right)^{p-1} + \tfrac{\varepsilon}{2},
\]
if $p<\delta +1$. Let $\varepsilon>0$ as above, then there exists $\rho \in \Gamma$, $z \in \Z^2$ and $i$ such that  
 \begin{align*}
 \left[w\right]_{A^{S}_1} -\varepsilon/2 
 & \leq \left(\frac{1}{|\ell^{\rho}_{i,z}|}\int_{Q_z(i)}w dx \right) \norm{w^{-1}}_{L^\infty(\ell^{\rho}_{i,z})}
 \\ &\leq \left(\frac{1}{|\ell^{\rho}_{i,z}|}\int_{Q_z(i)}w dx \right) \left(\frac{1}{|\ell^{\rho}_{i,z}|}\int_{\ell^{\rho}_{i,z}} w^{1-p'} dx  \right)^{p-1} + \tfrac{\varepsilon}{2}.  
 \end{align*}
Since $w \in A^S_p$, taking supremum we obtain 
\[
 \left[w\right]_{A^{S}_1}- \varepsilon  \leq  \left[w\right]_{A^{S}_p}. 
\]
\end{proof}

\section{Proofs of the main results}\label{sec:proofs}

We start this section wit the proof of Theorem \ref{thm:necessary}, that is, a necessary condition on the weight $w$ for the maximal operator $M_\delta$ to be bounded on $L^p(w)$.

\begin{proof}[Proof of Theorem \ref{thm:necessary}]
Let us denote by $\mathcal{S} \subseteq \rr^2$ the family of all boundaries of squares centered at any $x \in \rr^2$ and sidelength $2r$, $1 \leq r \leq 2$. Each element in $\mathcal{S}$ will be denoted by $S(x,r)$. Now, given $S(\tilde{x},r) \in \mathcal{S}$, lets consider $f\1_{S_\delta (\tilde{x},r)}$.

Let $Q_{\tilde{x}}$ be the square centered at $\tilde{x}$ with sidelength $\delta$. Then, for any $x$ in $Q_{\tilde{x}}$ we have that
\[
S_\delta (\tilde{x}, r) \subseteq S_{4\delta}(x,r).
\]
Then
 \begin{eqnarray*}
\min_{j=1}^4 \avgint_{S^j_\delta (\tilde{x},r)} |f\1_{S_\delta (\tilde{x},r)} (y)| \, dy &\lesssim & \min_{j=1}^4 \avgint_{S^j_{4\delta} (x,r)} |f\1_{S_\delta (\tilde{x},r)}(y)| \, dy \\ &\leq& M_{4\delta}\left( f\1_{S_\delta (\tilde{x},r)}\right )(x).
\end{eqnarray*}

Using this estimate and the hypothesis on $M_\delta$, we have that 
 \begin{eqnarray*}
w(Q_{\tilde{x}}) \left( \min_{j=1}^4 \avgint_{S^j_\delta (\tilde{x},r)} |f\1_{S_\delta (\tilde{x},r)}(y)| \, dy\right)^p &\leq & \int_{Q_{\tilde{x}}} |M_{4\delta}f\1_{S_\delta (\tilde{x},r)}(x)|^p w(x) \, dx \\ &\leq&
 C(4\delta,w) \int_{S_\delta (\tilde{x},r)} |f(x)|^p w(x) \, dx  \end{eqnarray*}
and therefore
\[
w(Q_{\tilde{x}}) \left( \min_{j=1}^4 \avgint_{S^j_\delta (\tilde{x},r)} |f(y)| \, dy\right)^p \leq
C(4\delta,w) \int_{S_\delta (\tilde{x},r)} |f(x)|^p w(x) \, dx,
\]
for any $f$ and $r \in [1,2]$.

\medskip

Let us choose $f= w^{-p'/p}$ in the previous inequality. Then 

\[
w(Q_{\tilde{x}}) \left( \min_{j=1}^4 \avgint_{S^j_\delta (\tilde{x},r)} w^{-p'/p} \, dx\right)^p \leq
C(4\delta,w) \int_{S_\delta (\tilde{x},r)} w^{-p'/p} \, dx,
\]

Let us denote by $\ell_{\delta}(\tilde{x},r)$ the side $j$ for which the minimum above is attained. Then, under the hypothesis of the weight being non trivial on $S_\delta (\tilde{x},r)$ (if not, a standard argument as in the classical case can be used to overcome this obstacle), we can write 
\[
w(Q_{\tilde{x}}) \left(\avgint_{\ell_{\delta}(\tilde{x},r)} w^{-p'/p} \, dy\right)^p \left( \int_{S_\delta(\tilde{x},r)} w^{-p'/p} \, dx.\right)^{-1} \leq
 C (4\delta,w),
\]
which is equivalent to
 \[
 \frac{w(Q_{\tilde{x}})}{|S_\delta (\tilde{x},r)|} \left(\avgint_{\ell_{\delta}(\tilde{x},r)} w^{-p'/p} \, dx\right)^p \left( \avgint_{S_\delta(\tilde{x},r)} w^{-p'/p} \, dx.\right)^{-1} \leq
 C (\delta,w).
 \]
Taking the supremum over $\mathcal{S}$, we obtain
 \begin{align*}
\sup_{S(\tilde{x},r) \in \mathcal{S}} \left(\frac{1}{|S_\delta(\tilde{x},r)|}\int_{Q_{\tilde{x}}} w \, dx\right) \left(\avgint_{\ell_{\delta}(\tilde{x},r)} w^{-\frac{p'}{p}} \, dx\right)^{p} \left( \avgint_{S_\delta(\tilde{x},r)} w^{-\frac{p'}{p}}  dx\right)^{-1} &\leq C(\delta,w).
\end{align*}
And this is precisely the $\mathcal{A}^S_p$ condition \eqref{eq:Ap-nonlinear} from Definition \ref{def:Ap-nonlinear}.
\end{proof}

Now we present the proofs of Theorem \ref{thm:sufficient-local} and Theorem \ref{thm:sufficient}, namely the sufficiency of the $A^S_p$ condition for the boundedness of the operators $\widetilde{M}_{\rho,\delta}$ and $M_\delta$.

\begin{proof}[Proof of Theorem \ref{thm:sufficient-local}]
Consider $Q_z = \Psi^{-1}(E_{\pi_1}) \cup \Psi^{-1}(E_{\pi_2}) $. We will consider only $E_{\pi_1}$, the other set can be treated in the same way. By proposition \ref{pro:normaindicadoras}, it is sufficient to prove that
\[
 \norm{\sum_{i=1}^v t_i \1_{\ell_i}}_{L^{p'} (7Q_z , w^{1-p'})} \leq C \delta^{-\frac{5}{4p}}  [w]_{A^S_{p,\rho,z}}^{1/p},
\]
where $v$ is the cardinality of $|E_{\pi_1}|$ and the $t_i$'s are defined by 
\begin{equation}\label{eq:tis2}
t_i = \frac{b_i}{|\ell_i|}  \left( \int_{Q_z(i)} w \, dx  \right)^{1/p}, \qquad \sum_{i=1}^v b_i^{p'}=1.
\end{equation}

The condition will be satisfied if we can prove the following equivalent formulation:
\begin{equation*}
\norm{\sum_{i=1}^v t_i\1_{\ell_i}}^{p'}_{L^{p'} (7Q_z w^{1-p'})} 
\leq \delta^{-\frac{5p'}{4p}}  [w]^{p'/p}_{A^S_{p,\rho,z}}.
\end{equation*}
Recall that $p'$ is a positive integer, so we have that
\begin{eqnarray*}
\norm{\sum_{i=1}^v t_i\1_{\ell_i}}^{p'}_{L^{p'}(7Q_z, w^{1-p'})} &=& \sum^{v}_{i_1,\ldots,i_{p'} = 1} t_{i_1}\ldots t_{i_{p'}} w^{1-p'}(\ell_{i_1}\cap\ldots \cap \ell_{i_{p'}})\\ 
&=& \sum^{v}_{i_1,\ldots,i_{p'} = 1} \prod_{j=1}^{p'} t_{i_j} \left(w^{1-p'}(\ell_{i_1}\cap\ldots \cap \ell_{i_{p'}})\right)^{1/p'}\\ 
&\leq& \prod_{j=1}^{p'}\left( \sum^{v}_{i_1,\ldots,i_{p'} = 1} t^{p'}_{i_j}w^{1-p'}(\ell_{i_1}\cap\ldots \cap \ell_{i_{p'}})\right)^{1/p'}
\end{eqnarray*}
In the last inequality we used the discrete  H\"older's inequality. Each of the $p'$ factors obtained have the same bound, so we only look one of them. For example, take the first:
\begin{eqnarray*}
\sum_{i_1,\ldots,i_{p'}=1}^v t_{i_1}^{p'} w^{1-p'}(\ell_{i_1}\cap\ldots \cap \ell_{i_{p'}})&=& \sum_{i_1=1}^v t_{i_1}^{p'} \sum_{i_2,\ldots,i_{p'}=1}^v  w^{1-p'}(\ell_{i_1}\cap\ldots \cap \ell_{i_{p'}}).
\end{eqnarray*}
\medskip

Now, in the last sum, for each fixed  $i_1$, we only have to consider those terms where $\ell_{i_1}\cap\ldots \cap \ell_{i_{p'}}\neq \emptyset$. We have a way to estimate the quantity of neighbors that each $\ell_{i_1}$ has from Lemma \ref{lem:chooseelement} . So, setting $C_{p,\delta}=C^{p'-1} \delta^{-\frac{5}{4}(p'-1)}$, where $C$ is the constant in the lemma, we can write
\[
\sum_{i_2,\ldots,i_{p'}=1}^v  w^{1-p'}(\ell_{i_1}\cap\ldots \cap \ell_{i_{p'}}) 
\leq C_{p,\delta}\, w^{1-p'}(\ell_{i_1}).
\]

Therefore,

\begin{eqnarray*}
 \sum_{i_1=1}^v t_{i_1}^{p'} \sum_{i_2,\ldots,i_{p'}}  w^{1-p'}\left (\bigcap_{j=1}^{p'}\ell_{i_{j}}\right ) &\le & C_{p,\delta}\, \sum_{i_1=1}^v t_{i_1}^{p'}  w^{1-p'}(\ell_{i_1})\\
&\leq& C_{p,\delta}\,  \sum_{i_1=1}^v  w^{1-p'}(\ell_{i_1}) \left(\frac{w(Q_z(i_1))}{|\ell_{i_1}|}\right)^{p'-1}  \frac{b^{p'}_{i_1}}{|\ell_{i_1}|} \\ 
&\leq& C_{p,\delta}\, \left[ w \right]^{\frac{1}{p-1}}_{A^S_{p,\rho,z}}\sum_{i_1=1}^v b_{i_1}^{p'}.
 \end{eqnarray*}

 Finally, we obtain that
\begin{equation*}
\sum_{i_1,\ldots,i_{p'}=1}^v t_{i_1}^{p'} w^{1-p'}(\ell_{i_1}\cap\ldots \cap \ell_{i_{p'}})\le C_{p,\delta}\,\left[ w \right]^{\frac{1}{p-1}}_{A^S_{p,\rho,z}}.
\end{equation*}

Since the same estimate holds for all the other factors involving $t_{i_2},\dots,t_{i_m}$, we conclude that
 \begin{eqnarray*}
  \norm{\sum_i t_i\1_{\ell_i}}_{L^{p'}(w^{1-p'})} &\leq& C^{1/p}\delta^{-\frac{5}{4}\frac{p'-1}{p'}} \left[ w \right]^{\frac{1}{(p-1)p'}}_{A^S_{p,\rho,z}} \\ 
  &\leq& C^{1/p} \delta^{-\frac{5}{4p}} \left[ w \right]_{A^S_{p,\rho,z}}^{1/p}
 \end{eqnarray*}
 and we obtain the desired result.
\end{proof}

Now we are ready to conclude with the proof of Theorem \ref{thm:sufficient} by an easy argument of decomposing $\mathbb{R}^2$ into cubes.

\begin{proof}[Proof of Theorem \ref{thm:sufficient}]
Let us start with the case $p>1$. We compute the $L^p(w)$ norm as
\begin{equation*}
\int_{\rr^2} |{M}_{\delta}f(x)|^p w(x)\, dx = \sum_{z\in\Z^2} \int {\1}_{Q_z} |{M}_{\delta}f(x)|^p w(x)\,dx 
\leq \sum_{z\in\Z^2} \norm{M_\delta f}^p_{L^p(Q_z w)} .
\end{equation*}

Now, using \eqref{eq:MvsMtilde} we may control the maximal operator at a local level by the discretized version as follows
\begin{eqnarray*}
\sum_{z\in\Z^2} \norm{M_\delta f}^p_{L^p(Q_z w)}  &\le& C
\sum_{z\in\Z^2}  \sup_{\rho\in \Gamma}\|\widetilde{M}_{\rho, 3\delta}f\|^p_{L^p(Q_z,w)}\\
&\le& C \delta^{-\frac{5}{4}} \left[ w \right]_{A^S_{p}}\sum_{z\in\Z^2} \norm{f}^p_{L^p(7 Q_z, w)}\\
&\le &  C \delta^{-\frac{5}{4}} \left[ w \right]_{A^S_{p}}\sum_{z\in\Z^2} \norm{f}^p_{L^p( Q_z, w)}
\end{eqnarray*}
since $\norm{\sum_{z\in\Z^n}{\1_{7Q_z}}}_\infty < \infty$. We conclude that
\[ 
\norm{M_\delta f}_{L^p (w)} \lesssim C \delta^{-5/4p}[w]^{1/p}_{A_p} \norm{f}_{L^p(w)} 
\]
and then the proof for $p>1$ is complete. To include the case $p=1$, we simply note that the bound above remains bounded when $p\to1$ by Proposition \ref{prop:ApProperties}.
\end{proof}

\subsection{Some concluding remarks}\label{sec:extension-all-p}
Here we include some final observations and examples to illustrate our results.
\subsubsection{Interpolation}
The reader should note that, unlike the case of cubic Muckenhoupt weights, our result in Theorem \ref{thm:sufficient} was obtained only for those values of $p$ such that $p'$ is an integer. The reason behind that is the key combinatorial Proposition \ref{pro:normaindicadoras}. More precisely, the use of that proposition in the proof of the main Theorem \ref{thm:sufficient} can be carried out expanding the $L^{p'}$ norm and that is possible for those selected values of $p'$. In the unweighted case, the remaining values of $p$ are obtained by interpolation. Here, one could try to interpolate with change of measure. This approach is indeed valid for example in the classical case of cubic $A_p$ weights as it is explained in \cite{Jones80}. But, since it is a consequence of the factorization property of $A_p$ weights into $A_1$ weights, here we do not know if such a factorization is valid or not. The best result that can be obtained by interpolation is that, since  $M$ is bounded in $L^{p_1}(w)$ for all $w\in A^S_{p_1}$ and also in $L^{p_2}(w)$ for all $w\in A^S_{p_2}$, then $M$ is bounded in $L^p(u)$ with
\begin{equation*}
u=w_1^{\frac{\theta}{p_1}}w_2^{{\frac{1-\theta}{p_2}}}\qquad\qquad \frac{1}{p}=\frac{\theta}{p_1}+\frac{1-\theta}{p_2}.
\end{equation*} 
The question here is how to prove that any $A^S_p$ weight can be written in that way \emph{without} using factorization.

\subsubsection{Examples} The following computation shows that  classical cubic $A_p$ weights belong to the skeleton class $\mathcal{A}^S_p$. We have to bound the expression
\[
A=\left(\frac{1}{|S_\delta (\tilde{x},r)|}\int_{Q_{\tilde{x}}}  w \ dx\right) \left(\avgint_{\ell_{\delta}(\tilde{x},r)} w^{-\frac{p'}{p}} dx\right)^{p} \left( \avgint_{S_\delta(\tilde{x},r)} w^{-\frac{p'}{p}} dx\right)^{-1}
\]
uniformly in $S(\tilde{x},r) \in \mathcal{S}$. Let us note that, since $w\ge0$ a.e.,
\begin{eqnarray*}
 \left(\avgint_{\ell_{\delta}(\tilde{x},r)} w^{-\frac{p'}{p}} \, dx\right)^{p} &\leq& \left( \frac{|S_\delta(\tilde{x},r)|}{|\ell_{\delta}(\tilde{x},r)|} \avgint_{S_\delta(\tilde{x},r)} w^{-\frac{p'}{p}} \, dx \right)^p \\ &\leq&  4^p  \left( \avgint_{S_\delta(\tilde{x},r)} w^{-\frac{p'}{p}} \, dx \right)^p.
\end{eqnarray*}
We now use the fact that both $Q_{\tilde{x}}$ and $S_\delta(\tilde{x},r)$ are contained in the cube $C(\tilde{x},2r)$ with center $\tilde{x}$ and sidelenght $4r$. Then,
\begin{eqnarray*}
A & \le & 4^p\left(\frac{1}{|S_\delta (\tilde{x},r)|}\int_{Q_{\tilde{x}}}  w  dx\right)  \left( \avgint_{S_\delta(\tilde{x},r)} w^{-\frac{p'}{p}}dx\right)^{p-1}\\
& \le &4^p\left(\frac{|C(\tilde{x},2r)|}{|S_\delta (\tilde{x},r)|}\avgint_{C(\tilde{x},2r)} w \, dx \right)  \left( \frac{|C(\tilde{x},2r)|}{|S_\delta (\tilde{x},r)|}\avgint_{C(\tilde{x},2r)} w^{-\frac{p'}{p}} \, dx\right)^{p-1} \\
& \le & 4^p \left(\frac{|C(\tilde{x},2r)|}{|S_\delta (\tilde{x},r)|}\right)^p \left(\avgint_{C(\tilde{x},2r)} w \, dx \right)  \left( \avgint_{C(\tilde{x},2r)} w^{-\frac{p'}{p}} \, dx\right)^{p-1}\\
&\le &  4^p \left(\frac{4r^2}{4r\delta}\right)^p \left(\avgint_{C(\tilde{x},2r)} w \, dx \right)  \left( \avgint_{C(\tilde{x},2r)} w^{-\frac{p'}{p}} \, dx\right)^{p-1}   \\
& \le & \frac{4^{2p}}{\delta^p}  \left(\avgint_{C(\tilde{x},2r)} w \, dx \right)  \left( \avgint_{C(\tilde{x},2r)} w^{-\frac{p'}{p}} \, dx\right)^{p-1}.
\end{eqnarray*}
The last expression is precisely controlled by the $A_p$ condition for cubes, $A_p \subset \mathcal{A}^S_{p}$. 

\subsubsection{Higher dimensions}\label{subsec:higher} Every argument presented in this article can be extended to deal with general $k$-skeletons of $n$-dimensional cubes with axes parallel sides in $\mathbb{R}^n$. The proofs are technically involved but straightforward. We include in this last section the results that can be obtained following the same line of ideas. 

Fix $ 0 \leq k < n \in \mathbb{N}$.
Let $\mathcal{S}_k \subset \rr^n$ be the family of all $k$-skeletons with center in $x \in \rr^n$ and sidelength $2r$, $1\leq r \leq 2$.
Each element of this family  will be denoted as  $S_k(x,r)$. Let $\delta>0$ be a fixed parameter and recall that $S_{k,\delta} (x,r)$ denotes the $\delta$-fattened $k$-skeleton and $S^j_\delta (x,r)$ each one of the $k$-faces of the fattened skeleton.

 We can define the $\mathcal{A}^{S_k}_{p}$ class as in Definition \ref{def:Ap-nonlinear}, just considering this new family $\mathcal{S}_k$. More precisely, we say that  $w \in \rr^n$ belongs to $\mathcal{A}^{S_k}_{p}$ class if
\begin{equation*}\label{eq:Ap-nonlinear-k}
 \sup_{S_k(x,r) \in \mathcal{S}_k} \left(\frac{1}{|S_{k,\delta} (x,r)|}\int_{Q_\delta(x)} w \right) \left(\avgint_{\ell_{k,\delta}(x,r)} w^\frac{-p'}{p} \right)^{p} \left( \avgint_{S_{k,\delta}(x,r)} w^\frac{-p'}{p} \right)^{-1} < \infty,
\end{equation*}
where $Q_\delta(x)$ is a $n$-dimensional cube of sidelength $\delta$ centered at $x$ and $\ell_{k,\delta}(x,r)$ denotes the $k$-face where the following minimum 
$$\min_{j=1}^N \frac{1}{|S^j_{k,\delta} (x,r)|}\int_{S^j_{k,\delta} (x,r)} w^\frac{-p'}{p} \, dx$$
is attained. When finite, the above supremum will be denoted by $[w]_{\mathcal{A}^{S_k}_{p}}$.
The following result is the analogous of Theorem \ref{thm:necessary} in the $n,k$ case and the proof is similar. 
\begin{theorem}
Let $w$ be a weight in $\mathbb{R}^n$ and suppose that the maximal operator $M^k_\delta$, for some fixed $\delta$, is bounded on $L^p(w)$ for $1<p<\infty$. Then the weight $w$ belongs to $\mathcal{A}^{S_k}_{p}$.
\end{theorem}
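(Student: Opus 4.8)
The plan is to mimic closely the proof of Theorem~\ref{thm:necessary}, replacing the four sides of a planar square by the $N=\binom{n}{k}2^{n-k}$ $k$-faces of the $n$-dimensional skeleton and keeping track of the dimensional constants. First I would fix an arbitrary $S_k(\tilde{x},r)\in\mathcal{S}_k$ and test the boundedness hypothesis against $g=f\1_{S_{k,\delta}(\tilde{x},r)}$, where $f$ is to be chosen later. The geometric input is a containment of fattened skeletons: letting $Q_\delta(\tilde{x})$ denote the $n$-cube of sidelength $\delta$ centered at $\tilde{x}$, for every $x\in Q_\delta(\tilde{x})$ the center has moved by at most $\tfrac{\sqrt{n}}{2}\delta$, so the Hausdorff distance between $S_k(x,r)$ and $S_k(\tilde{x},r)$ is at most $\tfrac{\sqrt{n}}{2}\delta$, whence
\[
S_{k,\delta}(\tilde{x},r)\subseteq S_{k,C_n\delta}(x,r),\qquad C_n=1+\tfrac{\sqrt{n}}{2}.
\]
This replaces the factor $4$ from the planar case.

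Next I would pass this containment to the level of individual faces: for each $j$ the fattened face $S^j_{k,\delta}(\tilde{x},r)$ sits inside the corresponding $S^j_{k,C_n\delta}(x,r)$, and comparing their (uniformly comparable) measures gives
\[
\min_{j=1}^N\avgint_{S^j_{k,\delta}(\tilde{x},r)}|g|\;\lesssim\;M^k_{C_n\delta}(g)(x),\qquad x\in Q_\delta(\tilde{x}),
\]
since the right-hand side is a supremum over admissible radii of the minimum over faces. Raising to the $p$-th power, integrating over $Q_\delta(\tilde{x})$ against $w$, and invoking the assumed $L^p(w)$ bound for $M^k_\delta$ (applied with parameter $C_n\delta$) then yields
\[
w(Q_\delta(\tilde{x}))\left(\min_{j=1}^N\avgint_{S^j_{k,\delta}(\tilde{x},r)}|f|\right)^p\;\lesssim\;C(C_n\delta,w)\int_{S_{k,\delta}(\tilde{x},r)}|f|^p\,w.
\]

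I would then specialize to the extremal test function $f=w^{-p'/p}$, exactly as in the planar proof: this converts both sides into weighted quantities and collapses the right-hand integral to $\int_{S_{k,\delta}(\tilde{x},r)}w^{-p'/p}$. Writing $\ell_{k,\delta}(\tilde{x},r)$ for the $k$-face realizing the minimum and dividing through by $|S_{k,\delta}(\tilde{x},r)|$, a rearrangement identical to the one in the proof of Theorem~\ref{thm:necessary} produces precisely the summand in the definition of $[w]_{\mathcal{A}^{S_k}_p}$; taking the supremum over all $S_k(\tilde{x},r)\in\mathcal{S}_k$ completes the argument.

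The only genuinely new ingredient, and the step I expect to require the most care, is the geometric containment together with its face-by-face refinement. One must verify that enlarging the fattening by the dimensional constant $C_n$ makes the skeleton centered at the nearby point $x$ engulf the one centered at $\tilde{x}$ \emph{face by face}, and that the measures $|S^j_{k,\delta}|$ and $|S^j_{k,C_n\delta}|$ remain comparable uniformly in $j$, $r$ and $\delta$, so that the min-then-sup structure defining $M^k_\delta$ dominates the minimum over the faces of $S_{k,\delta}(\tilde{x},r)$. Once this is established, the remaining manipulations are a verbatim transcription of the $n=2$, $k=1$ computation.
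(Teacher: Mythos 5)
Your proposal is correct and is essentially the paper's own argument: the paper gives no separate proof for the $n,k$ case, stating only that it is ``similar'' to that of Theorem \ref{thm:necessary}, and your adaptation---testing against $w^{-p'/p}$ restricted to the fattened skeleton, the face-by-face containment $S^j_{k,\delta}(\tilde{x},r)\subseteq S^j_{k,C_n\delta}(x,r)$ with $C_n=1+\tfrac{\sqrt{n}}{2}$ replacing the planar factor $4$, and the uniform comparability of the face measures $|S^j_{k,\delta}|\approx\delta^{n-k}$ for $r\in[1,2]$---is exactly the required transcription. The only imprecision you inherit (invoking the $L^p(w)$ bound at the dilated scale $C_n\delta$ rather than at $\delta$) is present verbatim in the paper's planar proof, which likewise uses $C(4\delta,w)$, so it does not constitute a gap relative to the paper.
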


To obtain a sufficient condition, as in the $2$-dimensional case with $k=1$ we introduce a linearized and discretized version of the $k$-skeleton maximal operator $M^k_\delta$ given in \eqref{eq:SkeletonMaximal}.
%As in the $2$-dimensional case with $k=1$, a sufficient condition on the discrete maximal operator $\widetilde{M}^k_{\rho,\delta}$, will imply a sufficient condition for $M^k_\delta$. 
Note that Definition \ref{def:delta-grid} holds if we replace $\Z^2$ by $\Z^n$ and the $1$-skeletons by $k$-skeletons. 

\begin{definition}   Let $Q_z$ be a fixed square given by a point in the lattice $\mathbb{Z}^n$. Consider a function $\rho\in\Gamma$ and $0<\delta <1$, if $f \in L^{1}_{\rm loc}(\rr^n)$ we define the  \textit{$(\rho,k)$-skeleton maximal function} with width $\delta$ as
\begin{center}
\begin{equation*}
{\widetilde{M}}^{k}_{\rho,\delta}f : Q_z \rightarrow \R,
\end{equation*}
\begin{equation*}
{\widetilde{M}}^{k}_{\rho,\delta}f(x)= \displaystyle\frac{1}{|\ell_{x,\delta}|} \int _ {\ell_{x,\delta}} f(y)\, dy,
\end{equation*}
\end{center}
where $\ell_{x,\delta}$ is the $\delta$-neighborhood of $\Phi_{\rho} (S_{k}(x^*, \rho(x^*)))$.
\end{definition}

The appropriate definition of an $A_p$ type class for the discrete maximal operator in the $n,k$ case is the following.

\begin{definition}
Let $\delta>0$ be a fixed eccentricity parameter and let $w$ be a weight in $\mathbb{R}^n$ and let $1<p<\infty$.
 For a given $z\in \mathbb{Z}^n$ and $\rho \in \Gamma$, we define the $k$-skeleton $A_p$-like quantity (we omit the implicit $\delta$ in the following definition):
\begin{equation}\label{eq:Ap-rho-z-Rn}
  [w]_{A^{S_k}_{p,\rho,z}} := \max_{i=1}^u  \left(\frac{1}{|\ell^{\rho}_{i,z}|}\int_{Q_z(i)} w \right) \left( \avgint_{\ell^{\rho}_{i,z}} w^\frac{-p'}{p}\right)^{p-1}.
\end{equation}

Since we need and uniform control over all $z$ and all $\rho$, we define
\begin{equation}\label{eq:ApSkeleton-Rn}
[w]_{A^{S_k}_p}: = \sup_{z \in \Z^n}  \sup_{\rho} \max_{i=1}^u  \left(\frac{1}{|\ell^{\rho}_{i,z}|}\int_{Q_z(i)} w \right) \left( \avgint_{\ell^{\rho}_{i,z}} w^\frac{-p'}{p}\right)^{p-1},
\end{equation}
We say that $w \in A^{S_k}_p$ is the above supremum is finite. 
\end{definition}
For the case $p=1$ we have the following definition. 
\begin{definition}
 Let $w$ a weight in $\mathbb{R}^n$. For a given $z \in \Z^n$ and $\rho \in \Gamma$ we define de skeleton $A_1$-like quantity: 
 \[
[w]_{A^S_{1,\rho,z}} :=  \max_{i=1}^u \left( \frac{1}{|\ell^{\rho}_{i,z}|} \int_{Q_z(i)} w \right)\norm{w^{-1}}_{L^{\infty}(\ell^{\rho}_{i,z})}.
\]
To have a uniform control over all $z$ and all $\rho$, we define
\[
[w]_{A^{S_k}_{1}} := \sup_{z \in \Z^n} \sup_{\rho} \max_{i=1}^u \left( \frac{1}{|\ell^{\rho}_{i,z}|} \int_{Q_z(i)} w \right)\norm{w^{-1}}_{L^{\infty}(\ell^{\rho}_{i,z})}.
\]
\end{definition}

In higher dimensions we have more than two orientations, not only vertical or horizontal.  The $n$ canonical vectors $e_1,\ldots, e_n$ in $\rr^n$ determine 
$\binom{n}{k}$ coordinate $k$-planes. We will denote these $k$-planes as 
$\pi_1,\dots,\pi_{\binom{n}{k} }.$

Each $k$-face of a $k$-skeleton of an $n$-dimensional cube is contained in an affine $k$-plane which is a translate of some $\pi_{j}$, $1\leq j \leq \binom{n}{k}  $; in this case we say that this $k$-face is parallel to $\pi_{j}$ (in the case $k=0$, the origin is the $0$-plane determined by the axes).

\begin{definition} \label{def:E_pi sets}
Let $\rho\in\Gamma$ and let  $Q^*_z = \{ x_1, \ldots, x_u\}$, for a given $z \in \Z^n$. Consider the $k$-skeletons $S_k(x_i, \rho(x_i))$, $i=1,\ldots,u$. We define the sets
\[
E_{\pi_j}:= \{ x_i \in Q^*_z :  \Phi(S_k (x_i,\rho(x_i)))= l^{\rho}_{i,z} \,  \text{is parallel to}\, \pi_j \},
\]
In the case $k=0$, we just consider the whole space $Q^*_z$.
\end{definition}
We will also define $\Psi: Q_z \rightarrow Q_z^*$, $x \rightarrow x^*$. This function maps each one of the $n$-cubes $Q_z(i)$ to its center $x_i$. Therefore it is a constant on each one of the $Q_z(i)$, $i=1,\ldots, u$, and $\Psi^{-1}(E_{\pi_j})$, $j=1,\ldots, \binom{n}{k}$, is a partition of $Q_z$. 

With this definition we can give the following generalized version of Proposition \ref{pro:normaindicadoras}
 \begin{proposition}
Let $0<\delta <1$, $z \in \Z^n$,  $\rho\in \Gamma$ and consider the $n$-cubes $Q_z(i)$, $1\le i\le v$. Fix $1<p<\infty$ and a weight $w$ on $Q_z$. Now, let $E$ be one of the sets from Definition \ref{def:E_pi sets}. We consider only the $k$-faces $\ell^{\rho}_{i,z}= \ell_i$ such that $x_i \in E$ ($z,\rho$ are fixed) and denote $|E|=v$. 
Let 
$\Psi^{-1}(E)=\bigcup_{j=1}^v Q_z(i_j)$.

 Suppose that there exists some constant $0<K<\infty$ such that 
\begin{equation*}
\norm{\sum_{i=1}^v t_i\1_{\ell_{i,k}}}_{L^{p'} (7 Q_z , w^{1-p'})} \leq K.
\end{equation*}
Then we have
\[
 \norm{\widetilde{M}^k_{\rho,\delta}f}_{L^p(\Psi^{-1}(E), w)} \leq K  \norm{f}_{L^p(7 Q_z, w)}.
\]
\end{proposition}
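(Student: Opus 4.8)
The plan is to mimic the proof of Proposition~\ref{pro:normaindicadoras} almost verbatim: inspecting that argument, one sees that nothing in it used the ambient dimension being $2$, nor that $E$ was one of the two specific sets $E_{\pi_1}, E_{\pi_2}$. The only structural inputs are that $\Psi^{-1}(E)$ is a disjoint union of the cubes $Q_z(i)$ and that $\widetilde{M}^{k}_{\rho,\delta}f$ is constant on each such cube; both remain true here, the former by Definition~\ref{def:E_pi sets} (where $\Psi^{-1}(E)$ is one block of the partition of $Q_z$) and the latter because $\ell_{x,\delta}$ depends on $x$ only through $x^{*}$, which is constant on $Q_z(i)$.

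First I would relabel the cubes of $\Psi^{-1}(E)$ as $Q_z(i)$, $i=1,\dots,v$, and write $\ell_i=\ell^{\rho}_{i,z}$. Since $\widetilde{M}^{k}_{\rho,\delta}f$ takes the constant value $\widetilde{M}^{k}_{\rho,\delta}f(x_i)=\frac{1}{|\ell_i|}\int_{\ell_i}f$ on $Q_z(i)$, we have
\begin{equation*}
\int_{\Psi^{-1}(E)} \big(\widetilde{M}^{k}_{\rho,\delta}f\big)^{p}\,w
=\sum_{i=1}^{v}\widetilde{M}^{k}_{\rho,\delta}f(x_i)^{p}\,w(Q_z(i)).
\end{equation*}
Dualizing the $\ell^{p}$ norm of the sequence $\big(\widetilde{M}^{k}_{\rho,\delta}f(x_i)\,w(Q_z(i))^{1/p}\big)_i$, I would pick nonnegative $b_i$ with $\sum_i b_i^{p'}=1$ realizing the norm, so that the $p$-th root of the display equals $\sum_i t_i\int_{\ell_i}f$ with $t_i=\frac{b_i}{|\ell_i|}\big(\int_{Q_z(i)}w\big)^{1/p}$, exactly as in the two-dimensional case.

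Finally, inserting $w^{1/p}w^{-1/p}$ and applying H\"older's inequality with exponents $p,p'$ on $7Q_z$ (recall $f$ may be taken supported there) bounds the last sum by
\begin{equation*}
\norm{f}_{L^{p}(7Q_z,w)}\,\Big\|\sum_{i=1}^{v}t_i\1_{\ell_i}\Big\|_{L^{p'}(7Q_z,\,w^{1-p'})},
\end{equation*}
using $w^{-p'/p}=w^{1-p'}$; the hypothesis controls the second factor by $K$, which gives the claim. I do not expect any real obstacle: the combinatorial heart of the matter, namely the control on how many chosen $k$-faces can meet a fixed one, is \emph{not} invoked in this proposition (it enters only later, through Lemma~\ref{lem:chooseelement}, when the hypothesis is verified with an explicit $K$). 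The single point worth a line of care is purely bookkeeping --- confirming that the passage from $n=2,k=1$ to general $n,k$ changes only notation in the duality and H\"older steps --- which it does.
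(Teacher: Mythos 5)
Your proposal is correct and follows exactly the route the paper intends: the paper states this higher-dimensional proposition without a separate proof precisely because it is the verbatim adaptation of the proof of Proposition \ref{pro:normaindicadoras} (constancy of $\widetilde{M}^k_{\rho,\delta}f$ on each $Q_z(i)$, duality with $\sum_i b_i^{p'}=1$, then H\"older with $w^{1/p}w^{-1/p}$ against the hypothesis). You also correctly observe that the combinatorial Lemma \ref{lem:chooseelement} plays no role here and only enters later when the hypothesis is verified with an explicit $K$.
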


Finally, we give the analogous results for Theorem \ref{thm:sufficient-local} and Theorem \ref{thm:sufficient}
 \begin{theorem}
 Let $w$ be a weight in $\mathbb{R}^n$. If $w\in A^{S_k}_{p,\rho,z}$ and $p'\in \mathbb{N}$, then 
\begin{equation*}
\|\widetilde{M}^k_{\rho,\delta}f\|_{L^{p}(Q_z,w)} \leq  C (k,n)  \delta^{\frac{1}{p}\left(\frac{(n-k)(2n-1)}{2n}-n\right)}  [w]^{\frac{1}{p}}_{p,\rho,z} \norm{f}_{L^{p}(7Q_z,w)},
\end{equation*}
where $C(k,n)$ is a constant depending on $k,n$.

 \end{theorem}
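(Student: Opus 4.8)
The plan is to mirror the proof of Theorem \ref{thm:sufficient-local} line by line, tracking how the two place-dependent ingredients change in passing from $(n,k)=(2,1)$ to general $(n,k)$: the cardinality $u=\delta^{-n}$ of the $\delta$-grid inside $Q_z$, and the overlap exponent furnished by Lemma \ref{lem:chooseelement}. First I would fix one orientation class $E=E_{\pi_j}$ from Definition \ref{def:E_pi sets}, set $v=|E|$, and invoke the generalized version of Proposition \ref{pro:normaindicadoras}; it reduces the claimed bound to the packing estimate
\begin{equation*}
\norm{\sum_{i=1}^v t_i\1_{\ell_i}}_{L^{p'}(7Q_z,w^{1-p'})}\le C(n,k)\,\delta^{\frac{1}{p}\left(\frac{(n-k)(2n-1)}{2n}-n\right)}[w]^{1/p}_{A^{S_k}_{p,\rho,z}},
\end{equation*}
with $t_i=\frac{b_i}{|\ell_i|}\left(\int_{Q_z(i)}w\right)^{1/p}$ and $\sum_i b_i^{p'}=1$. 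Summing the resulting bounds over the $\binom{n}{k}$ orientation classes costs only a dimensional constant, so it suffices to treat a single $E_{\pi_j}$, inside which every chosen $k$-face $\ell_i$ is parallel to the fixed coordinate $k$-plane $\pi_j$.

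Since $p'\in\nn$, I would expand the $L^{p'}$ norm into the multilinear sum $\sum_{i_1,\dots,i_{p'}}t_{i_1}\cdots t_{i_{p'}}\,w^{1-p'}(\ell_{i_1}\cap\cdots\cap\ell_{i_{p'}})$ and apply the discrete H\"older inequality exactly as in the two-dimensional case, so that it is enough to bound a single symmetric factor $\sum_{i_1}t_{i_1}^{p'}\sum_{i_2,\dots,i_{p'}}w^{1-p'}(\ell_{i_1}\cap\cdots\cap\ell_{i_{p'}})$. The core is the neighbour count for fixed $i_1$: because all faces in $E_{\pi_j}$ are parallel to $\pi_j$, the $\delta$-neighbourhood $\ell_{i}$ can meet $\ell_{i_1}$ only if the underlying $k$-face lies within $O(\delta)$ of the affine $k$-plane $V$ through $\ell_{i_1}$, and since the centres lie in $\delta\Z^n$ and the radii in $[1,2]\cap\delta\Z$ this confines the relevant faces to boundedly many (depending only on $n,k$) parallel translates of $\pi_j$. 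Lemma \ref{lem:chooseelement} bounds the number of chosen faces in each such affine $k$-plane by $C_{n,k}u^{1-\frac{(n-k)(2n-1)}{2n^2}}$, whence, writing $\beta=1-\frac{(n-k)(2n-1)}{2n^2}$ and using $u=\delta^{-n}$ together with $w^{1-p'}(\ell_{i_1}\cap\cdots\cap\ell_{i_{p'}})\le w^{1-p'}(\ell_{i_1})$,
\begin{equation*}
\sum_{i_2,\dots,i_{p'}}w^{1-p'}(\ell_{i_1}\cap\cdots\cap\ell_{i_{p'}})\le C_{n,k}^{\,p'-1}\,\delta^{-n\beta(p'-1)}\,w^{1-p'}(\ell_{i_1}).
\end{equation*}

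It then remains to insert the $A^{S_k}_{p,\rho,z}$ condition. Using the identities $p'/p=p'-1=1/(p-1)$ I would rewrite $t_{i_1}^{p'}w^{1-p'}(\ell_{i_1})=b_{i_1}^{p'}\left(\frac{w(Q_z(i_1))}{|\ell_{i_1}|}\right)^{1/(p-1)}\frac{1}{|\ell_{i_1}|}\int_{\ell_{i_1}}w^{1-p'}$ and bound the product of the last two factors by $[w]^{1/(p-1)}_{A^{S_k}_{p,\rho,z}}$, which is precisely the $(p-1)$-th root of \eqref{eq:Ap-rho-z-Rn}. Summing against $\sum_{i_1}b_{i_1}^{p'}=1$ shows that each H\"older factor is at most $C_{n,k}^{p'-1}\delta^{-n\beta(p'-1)}[w]^{1/(p-1)}_{A^{S_k}_{p,\rho,z}}$; taking the $1/p'$ power of the product of the $p'$ identical factors and using $(p'-1)/p'=1/p$ and $(p-1)p'=p$ collapses the exponents to $\delta^{-n\beta/p}$ and $[w]^{1/p}$, and a direct computation gives $-n\beta/p=\frac{1}{p}\left(\frac{(n-k)(2n-1)}{2n}-n\right)$, as required (for $n=2,k=1$ this returns $\delta^{-5/(4p)}$).

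The only genuinely new point compared with the planar case is the geometric overlap step, and I expect it to be the main obstacle: one must verify that parallel $k$-faces whose $\delta$-neighbourhoods intersect are trapped in $O_{n,k}(1)$ affine $k$-planes, so that Lemma \ref{lem:chooseelement} can be applied plane-by-plane. This is where the higher-dimensional geometry (parallel $k$-dimensional faces inside $\rr^n$, rather than parallel segments in the plane) enters; once it is settled, the remaining algebra is identical to that of Theorem \ref{thm:sufficient-local} and merely propagates the changed values $u=\delta^{-n}$ and $\beta$ through the same chain of estimates.
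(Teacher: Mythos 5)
Your proposal is correct and follows exactly the route the paper intends: the paper gives no separate proof of this higher-dimensional statement, asserting only that it follows the same line of ideas as Theorem \ref{thm:sufficient-local}, and your argument is precisely that adaptation (the generalized version of Proposition \ref{pro:normaindicadoras}, expansion of the $L^{p'}$ norm for integer $p'$, discrete H\"older, neighbour count via Lemma \ref{lem:chooseelement}, insertion of the $A^{S_k}_{p,\rho,z}$ condition), with correct exponent bookkeeping ($u=\delta^{-n}$, $\beta=1-\frac{(n-k)(2n-1)}{2n^2}$, so $u^{\beta/p}=\delta^{\frac{1}{p}\left(\frac{(n-k)(2n-1)}{2n}-n\right)}$). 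The geometric point you flag as the main obstacle---that parallel $k$-faces with vertices on the $\delta$-grid whose $\delta$-neighbourhoods meet must lie in $O_{n,k}(1)$ translates of $\pi_j$, so the lemma can be applied plane-by-plane---is indeed the right and valid justification, and is exactly the detail the paper leaves implicit already in the planar case.
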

 
 \begin{theorem}
 Let $w\in A^{S_k}_p$ in $\mathbb{R}^n$ for $p$ such that $p'\in \mathbb{N}$ or $p=1$. For any fixed $\delta>0$, we have that
\begin{equation*}\label{eq:LpBoundM-Rn}
\|M_\delta f\|_{L^p(w)}\le C(k,n)  \delta^{\frac{1}{p}\left(\frac{(n-k)(2n-1)}{2n}-n\right)} [w]^{\frac{1}{p}}_{A^{S_k}_p}\|f\|_{L^p(w)}.
\end{equation*}
 \end{theorem}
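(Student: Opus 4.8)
The plan is to reproduce the globalization argument used for Theorem~\ref{thm:sufficient}, now over the lattice $\mathbb{Z}^n$ instead of $\mathbb{Z}^2$, feeding in the higher-dimensional local estimate stated just above (the analog of Theorem~\ref{thm:sufficient-local}) in place of the planar one. First I would treat $p>1$ with $p'\in\mathbb{N}$ and decompose $\mathbb{R}^n=\bigcup_{z\in\mathbb{Z}^n}Q_z$ into the unit lattice cubes, so that
\begin{equation*}
\int_{\mathbb{R}^n}|M^k_\delta f|^p\,w\,dx=\sum_{z\in\mathbb{Z}^n}\int_{Q_z}|M^k_\delta f|^p\,w\,dx=\sum_{z\in\mathbb{Z}^n}\norm{M^k_\delta f}^p_{L^p(Q_z,w)}.
\end{equation*}

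Next I would pass to the discretized operator on each cube. The higher-dimensional version of the pointwise control \eqref{eq:MvsMtilde}, obtained by adapting \cite[Lemma 2.6]{OS18} to $k$-skeletons of $n$-cubes, gives $\norm{M^k_\delta f}_{L^p(Q_z,w)}\le 3\sup_{\rho\in\Gamma}\norm{\widetilde{M}^k_{\rho,3\delta}f}_{L^p(Q_z,w)}$. Applying the local sufficiency estimate above to each $\widetilde{M}^k_{\rho,3\delta}$, and bounding $[w]_{A^{S_k}_{p,\rho,z}}\le[w]_{A^{S_k}_p}$ uniformly in $\rho$ and $z$ by definition \eqref{eq:ApSkeleton-Rn}, I would obtain
\begin{equation*}
\sum_{z\in\mathbb{Z}^n}\norm{M^k_\delta f}^p_{L^p(Q_z,w)}\le C(k,n)\,\delta^{\frac{(n-k)(2n-1)}{2n}-n}\,[w]_{A^{S_k}_p}\sum_{z\in\mathbb{Z}^n}\norm{f}^p_{L^p(7Q_z,w)},
\end{equation*}
where the displayed $\delta$-power is exactly the $p$-th power of the one in the statement and the constant $3$ from $3\delta$ has been absorbed into $C(k,n)$.

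Finally I would sum the local contributions. Since the dilated cubes $\{7Q_z\}_{z\in\mathbb{Z}^n}$ have bounded overlap with a constant depending only on $n$ (that is $\norm{\sum_{z}\1_{7Q_z}}_\infty\le C_n$), one has $\sum_{z}\norm{f}^p_{L^p(7Q_z,w)}\le C_n\norm{f}^p_{L^p(\mathbb{R}^n,w)}$. Taking $p$-th roots and folding all dimensional constants into $C(k,n)$ yields the claimed inequality for $p>1$. For the endpoint $p=1$, I would invoke the higher-dimensional analog of Proposition~\ref{prop:ApProperties}, which guarantees $[w]_{A^{S_k}_p}\to[w]_{A^{S_k}_1}$ as $p\to1^+$; since both the constant $C(k,n)$ and the $\delta$-exponent $\frac{1}{p}\left(\frac{(n-k)(2n-1)}{2n}-n\right)$ remain bounded as $p\to1$, a limiting argument delivers the $L^1(w)$ bound.

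The bulk of the genuine difficulty does not live in this globalization, which is formally identical to the planar case, but is packaged into the local estimate on which it relies. There the main obstacle is the higher-dimensional combinatorial selection Lemma~\ref{lem:chooseelement}, whose overlap exponent $1-\frac{(n-k)(2n-1)}{2n^2}$ produces, via $u=\delta^{-n}$, precisely the per-index neighbor count $\delta^{\frac{(n-k)(2n-1)}{2n}-n}$ that governs the final power of $\delta$; and the requirement $p'\in\mathbb{N}$, needed to expand the $L^{p'}$ norm of $\sum_i t_i\1_{\ell_i}$ into an integer number of factors before applying Hölder. In the globalization step itself the only point requiring care is verifying that the overlap of $\{7Q_z\}$ and the aggregation of the $\delta$-powers are uniform in $z$, which they are.
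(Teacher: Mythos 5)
Your proposal is correct and is essentially the paper's own argument: the authors prove this theorem exactly by repeating the globalization of Theorem \ref{thm:sufficient} over $\mathbb{Z}^n$ (lattice decomposition, the higher-dimensional analog of \eqref{eq:MvsMtilde}, the local estimate uniform in $\rho$ and $z$, bounded overlap of $\{7Q_z\}$, and the limiting argument via Proposition \ref{prop:ApProperties} for $p=1$), which is why they only state the result and call the extension straightforward. Your accounting of the $\delta$-exponent via $u=\delta^{-n}$ and Lemma \ref{lem:chooseelement} matches the stated bound.
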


\section*{Acknowledgments}
We would like to thank Pablo Shmerkin for bringing this problem into our attention and for many valuables discussions.

\bibliographystyle{amsalpha}

\begin{thebibliography}{KNS18}

\bibitem[Bou86]{Bou86}
J.~Bourgain, \emph{Averages in the plane over convex curves and maximal
  operators}, J. Analyse Math. \textbf{47} (1986), 69--85.

\bibitem[Buc93]{Buckley}
Stephen~M. Buckley, \emph{Estimates for operator norms on weighted spaces and
  reverse {J}ensen inequalities}, Trans. Amer. Math. Soc. \textbf{340} (1993),
  no.~1, 253--272.

\bibitem[DS02]{DuoSeijo}
Javier Duoandikoetxea and Edurne Seijo, \emph{Weighted inequalities for some
  spherical maximal operators}, Illinois J. Math. \textbf{46} (2002), no.~4,
  1299--1312.

\bibitem[DV96]{DV96}
Javier Duoandikoetxea and Luis Vega, \emph{Spherical means and weighted
  inequalities}, J. London Math. Soc. (2) \textbf{53} (1996), no.~2, 343--353.
  \MR{1373065}

\bibitem[HL12]{HL-Ap-Ainf}
Tuomas~P. Hyt{\"o}nen and Michael~T. Lacey, \emph{The {$A_p$}-{$A_\infty$}
  inequality for general {C}alder\'on-{Z}ygmund operators}, Indiana Univ. Math.
  J. \textbf{61} (2012), no.~6, 2041--2092.

\bibitem[HP13]{HP}
Tuomas Hyt\"onen and Carlos P\'erez, \emph{Sharp weighted bounds involving
  ${A}_{\infty}$}, Anal. PDE \textbf{6} (2013), no.~4, 777--818.

\bibitem[Hyt12]{Hytonen:A2}
Tuomas Hyt\"onen, \emph{The sharp weighted bound for general
  {Calder\'on-Zygmund} operators}, Annals of Math. \textbf{175} (2012), no.~3,
  1476--1506.

\bibitem[Jon80]{Jones80}
Peter~W. Jones, \emph{Factorization of {$A_{p}$} weights}, Ann. of Math. (2)
  \textbf{111} (1980), no.~3, 511--530.

\bibitem[KNS18]{KNS18}
Tam\'as Keleti, D\'aniel~T. Nagy, and Pablo Shmerkin, \emph{Squares and their
  centers}, J. Anal. Math. \textbf{134} (2018), no.~2, 643--669.

\bibitem[KW99]{KW99}
Lawrence Kolasa and Thomas Wolff, \emph{On some variants of the {K}akeya
  problem}, Pacific J. Math. \textbf{190} (1999), no.~1, 111--154.

\bibitem[M90]{Muller1990}
Detlef M\"{u}ller, \emph{On weighted estimates for the {K}akeya maximal
  operator}, Colloq. Math. \textbf{60/61} (1990), no.~2, 457--475. \MR{1096390}

\bibitem[Mat15]{Mattila-book2015}
Pertti Mattila, \emph{Fourier analysis and {H}ausdorff dimension}, Cambridge
  Studies in Advanced Mathematics, vol. 150, Cambridge University Press,
  Cambridge, 2015. \MR{3617376}

\bibitem[MS95]{MullerSoria93}
Detlef M\"{u}ller and Fernando Soria, \emph{A double-weight {$L^2$}-inequality
  for the {K}akeya maximal function}, Proceedings of the {C}onference in
  {H}onor of {J}ean-{P}ierre {K}ahane ({O}rsay, 1993), no. Special Issue, 1995,
  pp.~467--478.

\bibitem[Muc72]{Muckenhoupt:Ap}
Benjamin Muckenhoupt, \emph{Weighted norm inequalities for the {H}ardy maximal
  function}, Trans. Amer. Math. Soc. \textbf{165} (1972), 207--226.

\bibitem[OS18]{OS18}
Andrea Olivo and Pablo Shmerkin, \emph{Maximal operators for cube skeletons},
  Preprint, arXiv:.

\bibitem[Ste76]{Stein76}
Elias~M. Stein, \emph{Maximal functions. {I}. {S}pherical means}, Proc. Nat.
  Acad. Sci. U.S.A. \textbf{73} (1976), no.~7, 2174--2175.

\bibitem[Tan01]{Tanaka2001}
Hitoshi Tanaka, \emph{The {F}efferman-{S}tein type inequality for the {K}akeya
  maximal operator}, Proc. Amer. Math. Soc. \textbf{129} (2001), no.~8,
  2373--2378.

\bibitem[Tho17]{Tho17}
R.~Thornton, \emph{Cubes and their centers}, Acta Math. Hungar. \textbf{152}
  (2017), no.~2, 291--313.

\bibitem[Var94]{Vargas-WeightedKakeya}
A.~M. Vargas, \emph{A weighted inequality for the {K}akeya maximal operator},
  Proc. Amer. Math. Soc. \textbf{120} (1994), no.~4, 1101--1105.

\bibitem[Wol97]{wol97}
Thomas Wolff, \emph{A {K}akeya-type problem for circles}, Amer. J. Math.
  \textbf{119} (1997), no.~5, 985--1026.

\end{thebibliography}

\end{document}